\newtheorem*{claimx}{Claim}
\newtheorem{prop}{Proposition}[section]
\newtheorem{thm}[prop]{Theorem}
\newtheorem{cor}[prop]{Corollary}
\newtheorem{lem}[prop]{Lemma}
\theoremstyle{definition}
\newtheorem{remark}[prop]{Remark}
\def\map{\mathrm{map}}
\def\Aut{\mathrm{Aut}}
\def\Obj{\mathrm{Obj}}
\def\vp{\varphi}
\def\al{\alpha}
\def\be{\beta}
\def\ga{\gamma}
\def\de{\delta}
\def\Ga{\Gamma}
\def\Si{\Sigma}
\def\si{\sigma}
\def\B{\ensuremath{\mathcal{B}}}
\def\C{\ensuremath{\mathcal{C}}}
\def\D{\ensuremath{\mathcal{D}}}
\def\E{\ensuremath{\mathcal{E}}}
\def\F{\ensuremath{\mathcal{F}}}
\def\H{\ensuremath{\mathcal{H}}}
\def\K{\ensuremath{\mathcal{K}}}
\def\G{\ensuremath{\mathcal{G}}}
\def\L{\ensuremath{\mathcal{L}}}
\def\O{\ensuremath{\mathcal{O}}}
\def\P{\ensuremath{\mathcal{P}}}
\def\R{\ensuremath{\mathcal{R}}}
\def\T{\ensuremath{\mathcal{T}}}
\def\Z{\ensuremath{\mathcal{Z}}}
\def\BB{\ensuremath{\mathbb{B}}}
\def\FF{\ensuremath{\mathbb{F}}}
\def\ZZ{\ensuremath{\mathbb{Z}}}
\def\Aut{\mathrm{Aut}}
\def\diag{\operatorname{diag}}
\def\GL{\operatorname{GL}}
\def\SL{\operatorname{SL}}
\def\Hom{\mathrm{Hom}}
\def\incl{\text{incl}}
\def\id{\textrm{id}}
\def\Inn{\operatorname{Inn}}
\def\op{\ensuremath{\mathrm{op}}}
\def\hocolim{\operatorname{hocolim}}
\def\res{\operatorname{res}}
\def\ker{\operatorname{Ker}}
\def\Ab{\mathbf{Ab}}
\def\Gps{\mathbf{Groups}}
\def\Top{\mathbf{Top}}
\def\hotop{\mathbf{hoTop}}
\def\spectra{\mathbf{Spectra}}
\def\tX{\tilde{X}}
\def\Sol{\operatorname{Sol}}
\def\Spin{\operatorname{Spin}}
\def\vertx{\operatorname{vert}}
\def\edge{\operatorname{edge}}
\def\trp[#1,#2,#3]{[\hskip-1.5pt[#1,#2,#3]\hskip-1.5pt]}
\def\darrow{\downarrow}
\newcommand{\xto}[1]{\xrightarrow{#1}}
\newcommand{\pcomp}[1]{{#1}^\wedge_p}
\newcommand{\hhocolim}[1]{\underset{#1}{\hocolim}}
\date{\today}
\title{Homology decompositions and groups inducing fusion systems}
\author{Assaf Libman}
\address{
	Department of Mathematical Sciences, 
	King's College,
	University of Aberdeen,
   	Aberdeen AB24 3UE , Scotland, U.K.,
}
\email{a.libman@abdn.ac.uk}
\author{Nora Seeliger}
\address{
L.A.G.A-Laboratoire de Mathematiques
Institut Galilee
99 Avenue J.B. Clement
93430 Villetaneuse
France}
\email{seeliger@math.univ-paris13.fr}
\begin{document}

\pagestyle{plain}


\renewcommand{\thethmain}{\Alph{thmain}}

\renewcommand{\theenumi}{(\arabic{enumi})}
\renewcommand{\labelenumi}{(\arabic{enumi})}

\begin{abstract}
We relate the construction of groups which realize saturated fusion systems and signaliser functors with homology decompositions of $p$-local finite groups.
We prove that the cohomology ring of Robinson's construction is in some precise sense very close to the cohomology ring of the fusion system it realizes.
\end{abstract}

\maketitle


\section{The main results}

The starting point of this paper is the results of Leary-Stancu in \cite{Leary-Stancu07} and Robinson in \cite{Robinson07}.
For any saturated fusion system $\F$ on a finite $p$-group $S$ they give recipes to construct groups $\pi_{LS}$ (for Leary-Stancu) and $\pi_R$ (for Robinson) which are generally infinite and which contain $S$ as a Sylow $p$-subgroup, see \S\ref{subsec:sfs-tansporter-and-linking-systems}, and realize $\F$ in the sense that $\F=\F_S(\pi)$.
Also, for primes $q=3,5 \mod (8)$, Aschbacher and Chermak constructed in \cite{Asch-Chermak} an infinite group $\pi_{AC}$ which contains a Sylow subgroup of $\Spin_7(q^n)$ as its Sylow $p$-subgroup and which realizes Solomon's fusion system $\F_{\Sol}(q^n)$, see \cite{LO1}.
%
%
%

The primary goal of this paper is to give a uniform and conceptual treatment for all these constructions.
Our approach is geometric rather than algebraic.
The main observation in this paper is Theorem \ref{thmA} below which we prove in the end of section \ref{sec:plfgs}.
Also, it was observed by Aschbacher that if a saturated fusion system $\F$ has the form $\F_S(\pi)$ where $\pi$ is possibly infinite, then a signaliser functor on $\pi$, see \S\ref{subsec:sfs-tansporter-and-linking-systems} for details, gives rise to an associated linking system $\L$. 
A partial converse to this observation is point \ref{thmA:signaliser} of Theorem \ref{thmA}.

\begin{thm}\label{thmA}
Fix a $p$-local finite group $(S,\F,\L)$ and let $\pi$ be a group which contains $S$ as a Sylow $p$-subgroup.
Assume that $\F \subseteq \F_S(\pi)$ and that there exists a map $f \colon B\pi \to \pcomp{|\L|}$ whose restriction to $BS \subseteq B\pi$ is homotopic to the natural map $\theta \colon BS \to \pcomp{|\L|}$.
Then
\begin{enumerate}
\item
$\F=\F_S(\pi)$
\label{thmA:induce-fusion}

\item
There is a signaliser functor $\Theta$ on $\pi$ which induces $\L$, namely $\L$ is a quotient of the transporter system $\T^c_S(\pi)$.
\label{thmA:signaliser}

\item
$H^*(|\L|;\FF_p)$ is a retract  of $H^*(B\pi;\FF_p)$ in the category of $\FF_p$-algebras.
It is equal to the image of $H^*(\pi;\FF_p) \xto{\res} H^*(S;\FF_p)$.
\label{thmA:retract}
\end{enumerate}
\end{thm}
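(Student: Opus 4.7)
The plan is to deduce all three conclusions from the homotopy-theoretic input provided by $f$, using in an essential way the mapping-space results for $\pcomp{|\L|}$ due to Broto--Levi--Oliver.

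For part (1), fix $P \leq S$ and $g \in \pi$ with $gPg^{-1} \leq S$, and write $c_g \colon P \to S$ for conjugation $x \mapsto gxg^{-1}$. Viewed as homomorphisms $P \to \pi$, the inclusion $\iota_P$ and $c_g$ differ by the inner automorphism of $\pi$ given by conjugation by $g$, so they induce homotopic maps $BP \to B\pi$. Composing with $f$ and invoking $f|_{BS} \simeq \theta$ yields $\theta|_{BP} \simeq \theta \circ Bc_g$ in $[BP, \pcomp{|\L|}]$. The Broto--Levi--Oliver classification $[BP, \pcomp{|\L|}] \cong \Rep(P, \F)$ then forces $\iota_P$ and $c_g$ to be $\F$-conjugate as homomorphisms $P \to S$, whence $c_g \in \F$. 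Together with the standing hypothesis $\F \subseteq \F_S(\pi)$ this gives $\F = \F_S(\pi)$.

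Granting (1), part (3) is essentially a diagram chase. The factorisation $\theta \simeq f|_{BS}$ gives $\theta^* = \res_S^\pi \circ f^*$, and $\theta^*$ is known to be injective with image the ring $H^*(BS;\FF_p)^\F$ of $\F$-stable elements. Consequently $f^*$ is injective and the image of $\res_S^\pi$ contains $H^*(BS;\FF_p)^\F$; the reverse containment follows because (1) shows every element of $\res_S^\pi(H^*(B\pi;\FF_p))$ is $\F_S(\pi) = \F$-stable. The assignment $r := (\theta^*)^{-1} \circ \res_S^\pi$ is then a well-defined $\FF_p$-algebra retraction of $f^*$, establishing both the retract claim and the identification of its image with that of restriction from $\pi$ to $S$.

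Part (2) requires genuinely new work. The strategy is to construct from $f$ a functor $F \colon \T^c_S(\pi) \to \L$ that is the identity on objects and restricts to the canonical inclusion on $\T^c_S(S)$; the signaliser functor is then defined by $\Theta(P) := \ker(\Aut_{\T^c_S(\pi)}(P) \to \Aut_\L(P))$, with analogous kernels on morphism sets, and $\L$ emerges as the quotient $\T^c_S(\pi)/\Theta$. To define $F$ on a morphism represented by $g \in N_\pi(P,Q)$ one reuses the homotopy from part (1): it is a path in $\operatorname{Map}(BP, \pcomp{|\L|})$ between two distinguished points, and by the Broto--Levi--Oliver identification of the components of this mapping space with $p$-completed centraliser linking systems, the homotopy class of this path selects a lift of $c_g \in \Hom_\F(P,Q)$ to $\Mor_\L(P,Q)$, which is declared to be $F(g)$. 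The principal difficulty, and the main obstacle of the whole theorem, is the rigidification required here: the chosen lifts must assemble into a strict functor, respecting composition and agreeing with the distinguished structure on $\T^c_S(S)$. This will require careful analysis of how homotopy classes of paths in the mapping spaces $\operatorname{Map}(BP, \pcomp{|\L|})$ behave under concatenation and naturality in $P$, combined with the rigidity of $p$-completed linking systems. The hypothesis that $f|_{BS}$ is genuinely homotopic to $\theta$ (and not merely some other map with the correct qualitative behaviour) is essential: it provides a consistent basepoint in each $\operatorname{Map}(BP, \pcomp{|\L|})$ relative to which the components parametrising $\Mor_\L(P,Q)$ are identified.
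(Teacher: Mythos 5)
Your arguments for parts~(1) and~(3) are essentially the paper's, modulo phrasing: the $[BP,\pcomp{|\L|}]\cong\Rep(P,\F)$ argument for~(1) is exactly what is encoded in the identity $\F_S(\theta)=\F$ used in the paper, and your treatment of~(3) via $\theta^*=\res^\pi_S\circ f^*$ and the stable-element isomorphism is the same diagram chase.

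For part~(2), however, there is a genuine gap, and it is not the one you flag. The rigidification you worry about is already handled by the existing Broto--Levi--Oliver machinery: the category $\L_S(f)$ of \cite[Def.~7.2]{BLO2}, whose morphisms are pairs $(\vp,[H])$ with $[H]$ a homotopy class of path in $\map(BP,X)$, is a strict category, the assignment $g_*\colon\L_S(f)\to\L_S(g\circ f)$ given by post-composition is a strict functor, and \cite[Prop.~7.3]{BLO2} identifies $\L^c_S(\theta)$ with $\L$. Combined with the observation (Proposition~\ref{prop:topological-linking-of-bpi}) that $\L^c_S(Bi)=\T^c_S(\pi)$, the functor $\rho\colon\T^c_S(\pi)\to\L$ exists for free; no extra rigidity argument is needed. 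What you do \emph{not} address is the surjectivity of $\rho$ on morphism sets, and this is the crux. You write that $\L$ ``emerges as the quotient $\T^c_S(\pi)/\Theta$'' with $\Theta(P)=\ker(\Aut_{\T^c_S(\pi)}(P)\to\Aut_\L(P))$, but if $\rho$ were not surjective this quotient would be a proper subcategory of $\L$, and $\Theta(P)$ need not be a complement of $Z(P)$ in $C_\pi(P)$, so $\Theta$ would not be a signaliser functor. The paper proves surjectivity by showing that $Z(P)\leq\Aut_\L(P)$ lies in the image of $\rho_P$; this uses the chain of homotopy equivalences
\[
\map(BP,BS)_{Bi_P^S}\xto{\;\simeq\;}\map(BP,B\pi)_{Bi_P^\pi}\xto{\;\simeq\;}\map(BP,\pcomp{|\L|})_{\theta|_{BP}}
\]
for $P$ $\F$-centric (both outer spaces are $BZ(P)$ by centricity and by \cite[Thm.~4.4(c)]{BLO2}), after which the fact that $\pi$ is surjective in the commutative square $\rho/\pi$ over $\F^c$ finishes the argument. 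You also omit the verification that $\Theta$ satisfies the two signaliser-functor axioms (conjugation equivariance and $\Theta(Q)\leq\Theta(P)$ for $P\leq Q$); the latter needs the uniqueness of extensions/restrictions in $\L$ coming from \cite[Cor.~3.10]{BCGLO} (that $\hat e$ is an epi and a mono), which is not a formality. In short: replace your hand-wrought rigidification attempt by citing $\L_S(-)$, and then supply the surjectivity of $\rho$ and the axiom checks, without which~(2) does not close.
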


The point of Propositions \ref{prop:thmA-conditions-R}, \ref{prop:ls-thmA-conditions} and \ref{prop:ac-thmA} below is that the groups $\pi_{LS}, \pi_R$ and $\pi_{AC}$ satisfy the hypotheses of this theorem, hence recovering the results of \cite{Leary-Stancu07, Robinson07} and \cite{Asch-Chermak}; See Remarks \ref{rem:we-do-as-good}--\ref{rem:cohomology-of-pi} below. 
The conceptual reason that Theorem \ref{thmA} applies to these groups, which is the main message of this paper, is that all these groups are built from different homology decompositions which express $|\L|$ as the homotopy colimit of a diagram of classifying spaces of finite groups.
By taking homotopy colimits of carefully chosen subdiagrams we obtain classifying spaces $B\pi$ together with maps $B\pi \to \pcomp{|\L|}$ where $\pi$ induces the fusion system $\F$. 
The groups $\pi_{LS}$ are built in \S\ref{subsec:Leary-Stancu} from the subgroup decomposition \cite[\S2]{BLO2}.
The groups $\pi_R$ are built in \S\ref{subsec:robinsons-construction} from the normalizer decomposition with respect to the collection of the $\F$-centric subgroups \cite{Lib06}.
The groups $\pi_{AC}$ are related to the normalizer decomposition with respect to the collection of the elementary abelian subgroup of $\F_{\Sol}(q^n)$ as we explain in \S\ref{subsec:Aschbacher-Chermak}.

An interesting conclusion of Theorem \ref{thmA}\ref{thmA:signaliser} is the following result whose proof is given in \S\S\ref{subsec:robinsons-construction}, \ref{subsec:Leary-Stancu}.
It settles a question which was open for some time.

\begin{cor}\label{corC}
Fix a saturated fusion system $\F$ on $S$ and let $\pi$ be either Robinson's group $\pi_R$ or Leary-Stancu's group $\pi_{LS}$ so that $\F=\F_S(\pi)$.
Then the following are equivalent.
\begin{enumerate}
\item
$\F$ has an associated linking system $\L$.
\label{corC:1}

\item
There exists a signaliser functor on $\pi$ which induces $\L$.
\label{corC:2}
\end{enumerate}
\end{cor}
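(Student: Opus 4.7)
The plan is to derive both directions from Theorem \ref{thmA} applied to $\pi\in\{\pi_R,\pi_{LS}\}$.

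The implication \ref{corC:2} $\Rightarrow$ \ref{corC:1} is immediate from the definition of a signaliser functor inducing a linking system: the resulting quotient of $\T^c_S(\pi)$ is by construction a linking system associated to $\F=\F_S(\pi)$. This is the Aschbacher observation recalled in the introduction, and no further work is required.

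For the non-trivial direction \ref{corC:1} $\Rightarrow$ \ref{corC:2}, I would assume that $\L$ is an associated linking system for $\F$ and apply Theorem \ref{thmA}\ref{thmA:signaliser} to the $p$-local finite group $(S,\F,\L)$ and the chosen $\pi$. Two of the three hypotheses of Theorem \ref{thmA} are built into the constructions of \cite{Leary-Stancu07} and \cite{Robinson07}: by design $\pi$ contains $S$ as a Sylow $p$-subgroup and satisfies $\F=\F_S(\pi)$, so in particular $\F\subseteq\F_S(\pi)$. The task therefore reduces to producing a map $f\colon B\pi\to\pcomp{|\L|}$ whose restriction to $BS$ is homotopic to the canonical map $\theta$; once such an $f$ exists, conclusion \ref{thmA:signaliser} of Theorem \ref{thmA} immediately supplies the required signaliser functor on $\pi$ inducing $\L$.

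The construction of $f$ is where the homotopy-colimit descriptions of $|\L|$ enter. By \cite[\S2]{BLO2} the space $\pcomp{|\L|}$ is the $p$-completion of the homotopy colimit of the subgroup decomposition on the $\F$-centric collection, and by \cite{Lib06} it is also the $p$-completion of the homotopy colimit of the normalizer decomposition on that same collection. I would identify $\pi_{LS}$ (respectively $\pi_R$) as the colimit in groups of the diagram of subgroups of $\pi$ indexing the subgroup (respectively normalizer) decomposition, so that the natural comparison map from $B$ of a colimit of groups to the homotopy colimit of the associated diagram of classifying spaces produces, after $p$-completion, the required $f$. These identifications are the content of Propositions \ref{prop:thmA-conditions-R} and \ref{prop:ls-thmA-conditions}. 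The main obstacle will be the basepoint compatibility: verifying $f|_{BS}\simeq \theta$ amounts to tracing $BS\hookrightarrow B\pi\to\hocolim$ through the specific vertex corresponding to $S$ in the indexing category and arguing that this factorization coincides, up to homotopy, with the canonical inclusion $BS\to|\L|$. This is manageable because $S$ plays a distinguished role in both indexing categories, so the comparison of basepoints can be reduced to a cofinality/naturality check rather than a new computation.
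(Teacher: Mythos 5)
Your proposal is correct and follows essentially the same route as the paper: both directions reduce to Theorem \ref{thmA}\ref{thmA:signaliser}, with the existence of $f\colon B\pi\to\pcomp{|\L|}$ (restricting to $\theta$ on $BS$) supplied by realizing $B\pi$ as the homotopy colimit of a carefully chosen one-dimensional subdiagram of the subgroup (resp. normalizer) decomposition, which is precisely the content of Propositions \ref{prop:ls-thmA-conditions} and \ref{prop:thmA-conditions-R}. The only slight imprecision is that $\pi$ is the colimit of a graph-of-groups subdiagram rather than of the full decomposition diagram, and the map into $\pcomp{|\L|}$ then arises by inclusion of this subdiagram into the full one together with Proposition \ref{prop:1-diml-htpy-nat-maps}, but this is exactly what the cited propositions supply.
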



\begin{remark}\label{rem:we-do-as-good}
We explain in Remark \ref{rem:R-avoid-linking-system} below, see also Proposition \ref{prop:stable-fusion}, that the structure of a $p$-local finite group is not essential to deduce point \ref{thmA:induce-fusion} of Theorem \ref{thmA} for arbitrary saturated fusion systems.
Thus, even though Theorem \ref{thmA} is not quite stated in this way, our results are as strong as those of \cite{Leary-Stancu07} and \cite{Robinson07} which apply to arbitrary saturated fusion systems.
\end{remark}

\begin{remark}
Our approach shows that Robinson's original construction isn't quite the natural one.
The group $\pi_R$ that we construct in \S\ref{subsec:robinsons-construction} is a factor group of Robinson's original construction \cite{Robinson07}.
See Remark \ref{rem:robinson-original-vs-pir} below. 
Our costruction of $\pi_R$ is inspired by the results of the first author and Antonio Viruel in \cite{Lib-Vir}.
\end{remark}

\begin{remark}
The construction of $\pi_{AC}$ in \cite{Asch-Chermak} requires $q \equiv 3 \text{ or } 5 \mod (8)$.
This is particularly important for the construction of the signaliser functors.
The group $\pi_{AC}$ that we construct in \S\ref{subsec:Aschbacher-Chermak} is a variation of Aschbacher-Chermak's original construction.
Our group which is motivated by the geometric approach, requires no restriction on $q \mod (8)$, yet it admits a signaliser functor, see Remark \ref{rem:improved-ac}.
Thus, our construction improves Aschbacher-Chermak's albeit at the price of not being as explicit as their's.
\end{remark}

\begin{remark}\label{rem:cohomology-of-pi}
Finally, point \ref{thmA:retract} of Theorem \ref{thmA} applies to $\pi_{LS}, \pi_{R}$ and $\pi_{AC}$ and as far as we know, this cohomological information is new.
\end{remark}

From the cohomological point of view, the groups $\pi_R$ and $\pi_{AC}$ are very close to their ``target'' namely the ring $\H^*(\F)$ of the stable elements, see \cite[Theorem B]{BLO2}, in a sense made precise in the next result which is proven in \S\S\ref{subsec:robinsons-construction}, \ref{subsec:Aschbacher-Chermak}.

\begin{prop}\label{propB}
Let $\pi$ be either the group $\pi_R$ associated to a saturated fusion system $\F$ (see \S\ref{subsec:robinsons-construction}) or the group $\pi_{AC}$ if $\F=\F_{\Sol}(q^n)$ and $p=2$ (see \S\ref{subsec:Aschbacher-Chermak}).
Then 
\begin{enumerate}
\item
$B\pi$ is $p$-good in the sense of \cite[I.\S 5]{Bous-Kan}.

\item
In the kernel of $H^*(B\pi;\FF_p) \xto{Bi^*} H^*(BS;\FF_p)$ the product of any two elements is zero. 
\end{enumerate}
\end{prop}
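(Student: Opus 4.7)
The proof is based on the homotopy-colimit description of $B\pi$ given by the constructions in \S\ref{subsec:robinsons-construction} and \S\ref{subsec:Aschbacher-Chermak}. In both cases $B\pi$ is realised as $\hocolim_J BG_\sigma$, where $J$ is a finite indexing category of cohomological dimension at most $1$ (a graph of groups) and each $G_\sigma$ is a finite subgroup of $\pi$, chosen so that the natural map $B\pi \to \pcomp{|\L|}$ factors the map $f$ of Theorem~\ref{thmA} through inclusion of this subdiagram into the full normalizer decomposition of $|\L|$.

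For part~(2), I would apply the multiplicative Bousfield--Kan / descent spectral sequence
\[
E_2^{s,t} = \lim\nolimits_J^{\,s} H^t(BG_\sigma;\FF_p) \ \Longrightarrow\ H^{s+t}(B\pi;\FF_p).
\]
Because $J$ is $1$-dimensional, $E_2^{s,t} = 0$ for $s \geq 2$, so the associated decreasing filtration $F^\bullet$ on $H^*(B\pi;\FF_p)$ satisfies $F^2 = 0$, and by multiplicativity $F^1 \cdot F^1 = 0$. The remaining point is to identify $F^1$ with $K := \ker(Bi^*)$. The inclusion $F^1 \subseteq K$ is automatic from the description of the edge homomorphism, and the reverse inclusion follows by combining Theorem~\ref{thmA}\ref{thmA:retract} (which identifies $\operatorname{im}(Bi^*)$ with $H^*(|\L|;\FF_p)$) with a direct analysis of the compatibility conditions defining $\lim_J H^*(BG_\sigma;\FF_p)$: since each $G_\sigma$ has its Sylow $p$-subgroup conjugate into $S$, a compatible family is determined by its component at the vertex $\sigma_0$ with $G_{\sigma_0}=S$. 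This forces $K = F^1$, and hence $K \cdot K = 0$.

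For part~(1), each $BG_\sigma$ is $p$-good (as $G_\sigma$ is finite) and $H^*(BG_\sigma;\FF_p)$ is finite-dimensional in every degree. Because $J$ is finite and $1$-dimensional, the Bousfield--Kan $p$-completion tower for $B\pi = \hocolim_J BG_\sigma$ converges and $p$-completion commutes with the hocolim, so $\pcomp{B\pi} \simeq \hocolim_J \pcomp{BG_\sigma}$ and the natural map $B\pi \to \pcomp{B\pi}$ is an $\FF_p$-cohomology equivalence. Hence $B\pi$ is $p$-good.

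The main obstacle is the geometric input: verifying that the group-theoretic constructions of $\pi_R$ and of our variant of $\pi_{AC}$ really do present $B\pi$ as a $1$-dimensional hocolim of classifying spaces of finite subgroups in a way compatible with the map $f$ of Theorem~\ref{thmA}. For $\pi_R$ this is built into our reformulation in \S\ref{subsec:robinsons-construction} (Robinson's construction as a tree-of-groups built from the normalizer decomposition); for $\pi_{AC}$ one needs a careful examination of the elementary-abelian normalizer decomposition of $\F_{\Sol}(q^n)$, which is where the $p=2$ hypothesis enters. Once this geometric setup is in place, both parts follow from the spectral-sequence analysis above.
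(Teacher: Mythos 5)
Your approach to part (2) is essentially the paper's: apply the Bousfield--Kan spectral sequence for the $1$-dimensional diagram, use the vanishing $E_2^{s,*}=0$ for $s\geq 2$ to conclude $F^2=0$, and locate the kernel $K$ of $Bi^*$ inside $F^1$. The one imprecision is that you invoke Theorem~\ref{thmA}\ref{thmA:retract} to identify $K$ with $F^1$; this is not needed and not quite what one should prove. What the paper actually does is compare the Bousfield--Kan spectral sequence $E_2^{*,*}(\G)$ with that of the subdiagram $\P$ of Sylow $p$-subgroups (whose homotopy colimit is $BS$), and observes that the map $E_2^{0,*}(\G)\to E_2^{0,*}(\P)$ is injective because $\varprojlim^0$ is left exact and each $H^*(BG_\sigma)\to H^*(BP_\sigma)$ is injective by the Sylow condition. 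A diagram chase on the two resulting short exact sequences then yields $K\subseteq F^1 = E_\infty^{1,*}(\G)$, and the multiplicativity of the filtration gives $K\cdot K\subseteq F^2=0$. Your phrase that a compatible family ``is determined by its component at $\sigma_0$'' is a reasonable heuristic for why the edge map is injective on the complement of $F^1$, but as written it elides exactly the Sylow-injectivity that makes this true; also note that $G_{\sigma_0}$ is $\Aut_\L(S)$, not $S$.

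Part (1) has a genuine gap. You assert that ``the Bousfield--Kan $p$-completion tower for $B\pi = \hocolim_J BG_\sigma$ converges and $p$-completion commutes with the hocolim,'' and then read off $p$-goodness. This is not a valid step: $\FF_p$-completion does not commute with homotopy colimits in general, even for finite $1$-dimensional indexing categories and even when all the spaces in the diagram are $p$-good of finite type (homotopy pushouts of $p$-complete spaces need not be $p$-complete). The conclusion that $B\pi\to\pcomp{B\pi}$ is an $\FF_p$-cohomology equivalence is precisely what ``$p$-good'' means, so the argument as given begs the question. The paper's actual proof is group-theoretic and uses the fibre lemma: since $Y$ is a tree, $\pi$ is an amalgam of the finite groups $G_v$ and one takes $K\lhd\pi$ to be the normal subgroup generated by all $p'$-elements; $K$ is $p$-perfect (so $BK$ is $p$-good by \cite[Proposition~VII.3.2]{Bous-Kan}) and $\pi/K$ is a finite $p$-group. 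One then checks that $H_*(BK;\FF_p)$ is degreewise finite (Bousfield--Kan spectral sequence for $H_*(B\pi;\FF_p)$ plus Dwyer's strong convergence of the Eilenberg--Moore spectral sequence for $BK\to B\pi\to B(\pi/K)$) and that the $\pi/K$-action on $H^*(BK;\FF_p)$ is nilpotent, so the fibration lemma of \cite[Chapter~II, 5.1--5.2]{Bous-Kan} gives a nilpotent fibration $\pcomp{BK}\to\pcomp{B\pi}\to B(\pi/K)$ with $p$-complete base and fibre, hence $p$-complete total space; thus $B\pi$ is $p$-good. You will need to rebuild part (1) along these lines rather than trying to commute $p$-completion past the homotopy colimit.
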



%
%
%
%
\section{Saturated fusion systems and $p$-local finite groups}\label{sec:plfgs}

In this section we will \emph{very briefly} recall the notion of $p$-local finite groups and of transporter systems.
We will take the approach of Broto Levi and Oliver in \cite{BLO2} and Oliver-Ventura \cite[\S\S 2,3]{OV07}.
Also see \cite{BLO-survey}.
Fusion systems were first defined by Puig, e.g. \cite{Puig06}.

\subsection{Fusion systems}\label{subsec:sfs}
A \emph{fusion system} on a finite $p$-group $S$ is a small category $\F$ whose object set is the set of subgroups of $S$.
Morphism sets $\F(P,Q)$ are group monomorphisms which include the set $\Hom_S(P,Q)$ of conjugations by elements of $S$.
In addition, every morphism $P\to Q$ in $\F$ is the composition of an isomorphism $P \to P'$ in $\F$, which is necessarily a group isomorphism, followed by an inclusion $P' \leq Q$.
From the definitions, every endomorphism in $\F$ is an automorphism.

Subgroups $P, P' \leq S$ are called \emph{$\F$-conjugate} if they are isomorphic as objects of $\F$.
A subgroup $P \leq S$ is called \emph{fully centralized} if $P$ has the largest $S$-centralizer among all the subgroups in its $\F$-conjugacy class.
Similarly, $P$ is \emph{fully normalized} if its $S$-normalizer has the largest possible order among all the groups in its $\F$-conjugacy class.
We say that $\F$ is \emph{saturated} if the following two conditions hold.
First, every fully normalized subgroup $P \leq S$ is fully centralized and in this case $\Aut_S(P)=N_S(P)/C_S(P)$ is a Sylow $p$-subgroup of $\Aut_\F(P)$.
Second, if $\vp \in \F(P,S)$ is such that $\vp(P)$ is fully centralized then $\vp$ extends to a morphism $\tilde{\vp} \in \F(N_\vp,S)$ where $N_\vp \leq N_S(P)$ consists of the elements $g$ such that $\vp \circ c_g \circ \vp^{-1} \in \Aut_S(\vp(P))$ where $c_g$ is the inner automorphism $x \mapsto gxg^{-1}$ of $S$.

A subgroup $P \leq S$ is called \emph{$\F$-centric} if $P$ and all its $\F$-conjugates contain their $S$-centralizer.
We let $\F^c$ denote the full subcategory of $\F$ on the $\F$-centric objects.
A subgroup $P \leq S$ is called $\F$-radical if $\Inn(P)$ is the maximal normal $p$-subgroup of $\Aut_\F(P)$.
Alperin's fusion theorem \cite[Theorem A.10]{BLO2} says that any saturated fusion system $\F$ is generated by the $\F$-automorphism groups of its $\F$-centric $\F$-radical subgroups.
Namely, every morphism in $\F$ is obtained by composition of restrictions of $\F$-automorphisms of these groups.

An important source of saturated fusion systems are finite groups.
If $S$ is a Sylow $p$-subgroup of a finite group $G$, one forms the category $\F_S(G)$ whose objects are the subgroups of $S$ and whose morphisms are the group monomorphisms $c_g \colon P \to Q$ of the form $x \mapsto gxg^{-1}$ for some $g \in G$.


\subsection{Transporter systems, linking systems and signaliser functors}\label{subsec:sfs-tansporter-and-linking-systems}
A finite $p$-subgroup $S$ of a group $\pi$, possibly infinite, is a \emph{Sylow $p$-subgroup} if any finite $p$-subgroup $Q \leq \pi$ is conjugate to a subgroup of $S$.

Define $\F_S(\pi)$ where $S \leq \pi$ is a Sylow $p$-subgroup as we did above for finite groups; Up to isomorphism the choice of $S$ is immaterial.

The objects of the \emph{transporter system} $\T_S(\pi)$ are the subgroups of $S$; its morphism sets are
\[
\T_S(\pi)(P,Q)=N_\pi(P,Q) = \{ g \in \pi \colon gPg^{-1} \leq Q\}.
\]
There is an obvious surjective functor $\T_S(\pi) \to \F_S(\pi)$ which is the identity on objects and is the quotient by the action of $C_\pi(P)$ on morphism sets.
We denote by $\T^c_S(\pi)$ the full subcategory whose objects are the $\F$-centric subgroups of $S$ for some fusion system $\F$ on $S$ which will be understood from the context.

An associated \emph{centric linking system} $\L$ to a saturated fusion system $\F$ on $S$ is a category whose objects are the $\F$-centric subgroups.
It is equipped with a surjective functor $\pi \colon \L \to \F^c$ and an injective functor $\de \colon \T^c_S(S) \to \L$, both induce the identity on object sets.
In addition the following three conditions must be satisfied.
First, the image of $Z(P)$ under $\de \colon N_S(P) \to \Aut_\L(P)$ acts freely on $\L(P,Q)$, and $\F(P,Q)$ is isomorphic to the orbit set via $\pi$.
Second, for any $g \in N_S(P,Q)$ we have $\pi(\de(g))=c_g$.
Third, for any $f \in \L(P,Q)$ and any $g \in P \leq N_S(P)=\Aut_{\T^c_S(S)}(P)$ we have $\de(\pi(f)(g)) \circ f = f \circ \de(g)$ in $\L$.

A \emph{$p$-local finite group} is a triple $(S,\F,\L)$ of a saturated fusion system on $S$ together with an associated linking system.
It is not known if any saturated fusion system admits a centric linking system, and if so, if it is unique.

Now suppose that $\F=\F_S(\pi)$ for a group $\pi$ which contains $S$ as a Sylow $p$-subgroup.
A \emph{signaliser functor} on $\pi$ is an assignment $P \mapsto \Theta(P)$ for every $\F$-centric $P \leq S$ such that $\Theta(P)$ is a complement of $Z(P)$ in  $C_\pi(P)$ and such that if $gPg^{-1} \leq Q$ then $\Theta(Q) \leq g\Theta(P)g^{-1}$.
Indeed, $\Theta$ is a contravariant functor from $\T^c_S(\pi)$ to the category of groups.
A signaliser functor $\Theta$, if it exists, gives rise to a centric linking system $\L=\L_{\Theta}$ where $\L(P,Q)=N_\pi(P,Q)/\Theta(P)$.
See \cite{Asch-Chermak} and also \cite[Section 3]{Lib-Vir} for more details.

\subsection{Homotopy theoretic constructions}\label{subsec:sfs-homotopy-theoretic-constructions}

Fix a finite $p$-group $S$ and a map $f \colon BS \to X$.
By \cite[Definition 7.1]{BLO2} $f$ gives rise to 
a fusion system $\F_S(f)$ on $S$ whose objects are the subgroups of $S$ and a group monomorphism $\vp \colon P \to Q$ belongs to $\F_S(f)$ if and only if $BP \xto{B\vp} BS \xto{f} X$ is homotopic to $BP \xto{Bi} BS \xto{f} X$.
It follows directly from the definitions that any map $g \colon X \to Y$ induces an inclusion of fusion systems (not saturated in general)
\[
\F_S(f) \subseteq \F_S(g \circ f).
\]
The category $\L_S(f)$, see \cite[Definition 7.2]{BLO2}, has the same object set as $\F=\F_S(f)$.
The morphisms $P \to Q$ in $\L_S(f)$ are pairs $(\vp,[H])$ where $\vp \in \F(P,Q)$ and $[H]$ is the homotopy class of a path $H$ in $\map(BP,X)$ from $BP \xto{B\vp} BS \xto{f} X$ to $BP \xto{Bi} BS \xto{f} X$.
There is a surjective functor $\pi \colon \L_S(f) \to \F_S(f)$ which ``forgets'' $[H]$.

A map $g \colon X \to Y$ gives rise, by composition, to a functor 
\[
g_* \colon \L_S(f) \to \L_S(g \circ f)
\]
over the projection functors to $\F_S(f) \subseteq \F_S(g \circ f)$, i.e. $\pi \circ g_* = \incl_{\F_S(f)}^{\F_S(gf)} \circ \pi$.

Define $\L^c_S(f)$  as the full subcategory of $\L_S(f)$ on the $\F_S(f)$-centric objects.
If $g \colon X \to Y$ is a map then $\F_S(f) \subseteq \F_S(g \circ f)$ so every $\F_S(g \circ f)$-centric subgroup of $S$ is also $\F_S(f)$-centric and we obtain a functor
\[
g_* \colon \L^c_S(f) \to \L_S^c(g \circ f)
\]
defined on the $\F_S(g \circ f)$-centric objects of $\L^c_S(f)$.

\subsection{Classifying spaces}\label{subsec:classifying-spaces}
The \emph{classifying space} of a $p$-local finite group is $\pcomp{|\L|}$.
The inclusion $\de \colon S \to \Aut_\L(S)$ thought of as a full subcategory of $\L$ gives rise to a map 
\begin{equation}\label{eq:theta}
\theta \colon BS \to \pcomp{|\L|}
\end{equation}
which will play a fundamental role in this paper.
One of the key features of the map $\theta$ is \cite[Proposition 7.3]{BLO2}
\[
\F=\F_S(\theta) \qquad \text{ and } \qquad \L=\L^c_S(\theta).
\]
For any group $G$ let $\B G$ denote the category with one object $o_G$ and $G$ as its automorphism set.
A homomorphism $\vp \colon G' \to G$ induces a functor $\B\vp \colon \B G' \to \B G$.
It is well known that if $H \leq G$ then the functor $\B C_G(H) \times \B H \xto{(x,y) \mapsto xy} \B G$ gives rise, after realization and adjunction, to a homotopy equivalence $BC_G(H) \xto{\simeq} \map(BH,BH)_{Bi}$ where the space on the right is the path component of the inclusion $BH \xto{Bi} BG$.

\begin{prop}\label{prop:topological-linking-of-bpi}
Fix an inclusion $S \leq \pi$ of a Sylow $p$-subgroup and consider $Bi \colon BS \to B\pi$.
Then $\F_S(Bi)=\F_S(\pi)$ and $\L_S(Bi)=\T_S(\pi)$.
\end{prop}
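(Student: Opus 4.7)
The plan is to translate both topological definitions into categorical data in the one-object category $\B\pi$. Since $BP$ is a $K(P,1)$ and $B\pi$ is a $K(\pi,1)$, the mapping space $\map(BP,B\pi)$ has path components indexed by $\Hom(P,\pi)/\pi$-conjugacy, and the component containing $B\rho$ is homotopy equivalent to $BC_\pi(\rho(P))$; this is the analogue for a discrete but possibly infinite $\pi$ of the equivalence recalled in \S\ref{subsec:classifying-spaces}. Consequently, homotopy classes of paths in $\map(BP,B\pi)$ between two maps $B\rho_0$ and $B\rho_1$ correspond bijectively to natural transformations in $\B\pi$ between the associated functors $\B P\to\B\pi$, i.e.\ to elements $\alpha\in\pi$ satisfying the naturality relation with respect to $\rho_0$ and $\rho_1$.

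For $\F_S(Bi)=\F_S(\pi)$: a monomorphism $\vp\colon P\to Q$ between subgroups of $S$ satisfies $Bi\circ B\vp\simeq Bi\circ Bi_P$ iff the corresponding two homomorphisms $P\to\pi$ are $\pi$-conjugate, iff there exists $g\in\pi$ with $\vp(x)=gxg^{-1}$ for every $x\in P$. Since $\vp$ takes values in $Q$ this forces $g\in N_\pi(P,Q)$, and hence $\vp\in\F_S(Bi)(P,Q)$ iff $\vp=c_g|_P\in\F_S(\pi)(P,Q)$.

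For $\L_S(Bi)=\T_S(\pi)$: I would build an identity-on-objects functor $\T_S(\pi)\to\L_S(Bi)$ sending $\tau\in N_\pi(P,Q)$ to the pair $(c_\tau|_P,[H_\tau])$, where $H_\tau$ is the path in $\map(BP,B\pi)$ obtained by realising the natural transformation in $\B\pi$ determined by $\tau$. To prove it is a bijection on each hom-set, fix $\vp\in\F_S(\pi)(P,Q)$ and observe (i) the set of $\tau\in N_\pi(P,Q)$ with $c_\tau|_P=\vp$ is a single coset of $C_\pi(P)$, hence a free transitive $C_\pi(P)$-set, and (ii) by the mapping-space calculation above, the homotopy classes of paths with the required endpoints also form a free transitive set under $\pi_1(\map(BP,B\pi)_{Bi\circ Bi_P})=C_\pi(P)$. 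The explicit description of $H_\tau$ shows that the map intertwines these two torsor structures, so it is a bijection on each hom-set.

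Composition is preserved because composition in $\T_S(\pi)$ is multiplication in $\pi$, composition in $\L_S(Bi)$ combines function composition with concatenation of whiskered paths in the mapping space, and vertical composition of natural transformations between functors $\B P\to\B\pi$ is again multiplication in $\pi$; all three match. The main technical subtlety is identifying the torsor of path homotopy classes with a coset of $C_\pi(P)$ in $\pi$, via the standard Eilenberg--MacLane mapping-space calculation, which is valid for any discrete group whether finite or infinite; one must choose basepoints on both sides consistently (up to a harmless inversion) so that the resulting bijection respects composition.
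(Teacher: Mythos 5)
Your proof is correct and follows essentially the same route as the paper's: both identify path components of $\map(BP,B\pi)$ with $\pi$-conjugacy classes of homomorphisms to get $\F_S(Bi)=\F_S(\pi)$, and both construct the identity-on-objects functor $\T_S(\pi)\to\L_S(Bi)$ by realising the natural transformation in $\B\pi$ attached to $\tau\in N_\pi(P,Q)$ and then verify bijectivity on hom-sets via the $C_\pi(P)$-coset/torsor structure on each side. Your version merely spells out the composition check and the torsor intertwining a bit more explicitly than the paper's terse "functoriality is easy to check" and its one-line bijectivity statement.
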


\begin{proof}
The first isomorphism is clear because $[BP,B\pi]=\Hom(P,\pi)/\Inn(\pi)$.
Define a functor $\Phi \colon \T_S(\pi) \to \L_S(Bi)$ as follows.
It is the identity on objects and for a morphism $g \in N_\pi(P,Q)$ consider $c_g \colon P \to Q$ and the natural transformation $g \colon \B c_g \to \B i_P^\pi$ between the functors $c_g,i_P^S \colon \B P \to \B \pi$ given by $o_\pi \xto{g} o_\pi$.
This natural transformation gives rise to a homotopy $I \times BP \to B\pi$ from $Bc_g$ and $Bi_P^\pi$, whence to a path $H$ in $\map(BP,B\pi)$ from $Bc_g$ to $Bi_P^\pi$.
Define $\Phi(g)=(c_g,[H])$.

The functoriality of $\Phi$ is easy to check.
It is bijective because $\Phi$ maps the coset $C_\pi(P)g \leq N_\pi(P,Q)$ bijectively onto $\pi_1^\#(\map(BP,B\pi);Bc_g,Bi_P^\pi) \cong  \pi_1BC_\pi(P)$ via $\B C_\pi(P) \times \B P \to \B \pi$.
\end{proof}

\subsection{The subgroup decomposition}\label{subsec:sfs-subgroup-decomposition}

Recall that the orbit category $\O=\O(\F)$ of a fusion system $\F$ has the same object set as $\F$ and $\O(P,Q)=\F(P,Q)/\Inn(Q)$.
See \cite[Definition 2.1]{BLO2}.
Let $\O(\F^c)$ denote the full subcategory on the $\F$-centric objects.

Fix a $p$-local finite group $(S,\F,\L)$.
By \cite[Proposition 2.2]{BLO2} there is a functor called the \emph{subgroup decomposition}
\[
\be \colon \O(\F^c) \to \Top
\]
which as a functor into $\hotop$ - the homotopy category of spaces -  is isomorphic to the functor $P \mapsto BP$.
Moreover, there is a homotopy equivalence  
\[
\hocolim_{\O(\F^c)} \be \xto{\simeq} |\L|
\] 
whose restriction to $\be(S)\simeq BS$ is homotopic to the map $BS \to |\L|$ induced by the inclusion $S \leq \Aut_\L(S)$.
Hence $\pcomp{\theta}$ is homotopic to the map in display \eqref{eq:theta}.

\subsection{The normalizer decomposition}\label{subse:sfs-normaliser-decomposition}

Let $\R$ be a set of $\F$-centric subgroups of $S$ which is closed under $\F$-conjugation.
The subdivision category $\bar{S}(\R)$ is the following poset; see \cite{Lib06} for more details where it is denoted $\bar{s}d(\R)$.
The objects of $\bar{S}(\R)$ are the $\F$-conjugacy classes $[P_\bullet]$ of chains of inclusions $P_0 \lneq \dots \lneq P_k$ of subgroups in $\R$ called ``$k$-simplices'' ($k \geq 0$).
We say that $P_\bullet$ is $\F$-conjugate to $P_\bullet'=P_0' \lneq \dots \lneq P_k'$ if there is an isomorphism $P_k \to P_k'$ in $\F$ which carries $P_i$ isomorphically onto $P_i'$ for all $i$.
There is a unique morphism $[P_\bullet] \to [Q_\bullet]$ in $\bar{S}(\R)$ if $Q_\bullet$ is $\F$-conjugate to a sub-sequence (a ``subsimplex'') of $P_\bullet$.

Let $\Aut_\F(P_\bullet)$ be the subgroup of $\Aut_\F(P_k)$ which leaves $P_0,\dots,P_k$ invariant.
Let $\Aut_\L(P_\bullet)$ be its preimage in $\Aut_\L(P_k)$.
Suppose that $Q_\bullet=Q_0\lneq \dots \lneq Q_m$ is a sub-simplex of $P_\bullet$ where $Q_m=P_i$ for some $i$.
Consider $e \in N_\pi(P_i,P_k)$ and set $\hat{e}=\de(e)\in \L(P_i,P_k)$.
For every $\vp \in\Aut_\L(P_\bullet)$ we have $\pi(\vp) \in \Aut_\F(P_\bullet)$ and therefore $\pi(\vp)|_{P_i} \in \Aut_\F(Q_\bullet)$.
It follows from \cite[Lemma 1.10(a)]{BLO2} and the fact that $\hat{e}$ is both an epimorphism and a monomorphism in $\L$, see \cite[Corollary 3.10]{BCGLO} that there exists a unique $\vp' \in \Aut_\L(Q_\bullet)$ such that $\vp \circ \hat{e}=\hat{e} \circ \vp'$.
This sets up a canonical group monomorphism $\Aut_\L(P_\bullet) \hookrightarrow \Aut_\L(Q_\bullet)$.
See \cite{Lib06} for more details.
We obtain a functor $[P_\bullet] \mapsto B\Aut_\L(P_\bullet)$ into the homotopy category of spaces (here we use the fact that the self-map that an inner automorphism of a group induces on its classifying space is homotopic to the identity).

The \emph{normalizer decomposition} with respect to the collection $\R$ of $\F$-centric subgroups, see \cite[Theorem A]{Lib06}, is a functor 
\[
\de \colon \bar{S}(\R) \to \Top
\]
which as a functor into $\hotop$ is isomorphic to the functor $[P_\bullet] \mapsto B\Aut_\L(P_\bullet)$.
In addition there is a map
\[
\hocolim_{\bar{S}(\R)} \de \xto{g} |\L|
\]
 whose restriction to $\de([P_\bullet])$ is induced, upon taking nerves, from the inclusion of categories $\B\Aut_\L(P_\bullet) \subseteq \L$.
Hence, the restriction of $\hocolim_{\bar{S}(\R)} \de \simeq |\L| \to \pcomp{|\L|}$ to $BS\leq B\Aut_\L(S)$ is homotopic to $\theta$ in display \eqref{eq:theta}.
If $\R$ contains all the $\F$-centric $\F$-radical subgroups of $S$ then $g$ is a homotopy equivalence, \cite[Theorem A]{Lib06}.

\begin{proof}[Proof of Theorem \ref{thmA}]
\ref{thmA:induce-fusion}
Let $Bi \colon BS \to B\pi$  denote the inclusion.
By Proposition \ref{prop:topological-linking-of-bpi} the map $f$ induces an inclusion
\[
\F_S(\pi) = \F_S(Bi) \subseteq \F_S(f \circ Bi) = \F_S(\theta) = \F.
\]
By hypothesis $\F \subseteq \F_S(\pi)$ and therefore equality holds.

\ref{thmA:signaliser}
We deduce from Proposition \ref{prop:topological-linking-of-bpi} and from \S\ref{subsec:sfs-homotopy-theoretic-constructions} that $\L^c_S(Bi)=\T^c_S(\pi)$, that $\L_S^c(f \circ Bi)=\L$  because $f \circ Bi \simeq \theta$, and that $f$ induces a functor $\rho \colon \T^c_S(\pi) \to \L$ such that
\[
\xymatrix{
{\T^c_S(\pi)} \ar@{->>}[d]_{\pi} \ar[r]^\rho &
\L \ar@{->>}[d]^\pi 
\\
{\F^c} \ar@{=}[r] & 
\F^c
}
\]
commutes.
We will now prove, and this is the key observation, that $\rho$ is surjective.

Set $\T=\T^c_S(\pi)$.
Since $Z(P) \leq \Aut_\L(P)$ acts on $\L(P,Q)$ with orbit set $\F(P,Q)$ and since the functors $\pi$ in the square above are surjective, it suffices to prove that $Z(P)$ is in the image of $\rho_P \colon \Aut_\T(P) \to \Aut_\L(P)$ for any $\F$-centric $P \leq S$.
To do this consider the inclusion $Bi_P^S \colon BP \to BS$.
The composition
\[
\map(BP,BS)_{Bi_P^S} \xto{Bi_*} \map(BP,B\pi)_{Bi_P^\pi} \xto{f_*} \map(BP,\pcomp{|\L|})_{f \circ Bi_P^\pi}
\]
is a homotopy equivalence because $P$ is $\F$-centric so the first space is homotopy equivalent to $BC_S(P)=BZ(P)$ and because the third space is homotopy equivalent to $BZ(P)$ by \cite[Theorem 4.4(c)]{BLO2} since $Bi \circ f \simeq \theta$.
We see from the definition of $\L_S(-)$, see \S\ref{subsec:sfs-homotopy-theoretic-constructions},  that $\rho_P$ carries $Z(P) \leq N_\pi(P)$ isomorphically onto $Z(P) \leq \Aut_\L(P)$.

We will now define the signaliser functor $\Theta$.
For every $\F$-centric $P \leq S$ set
\[
\Theta(P) \overset{\text{def}}{=} \ker ( \Aut_\T(P) \to \Aut_\L(P) ).
\]
Now, $\Aut_\T(P)=N_\pi(P)$ so we obtain a commutative diagram with exact rows
\[
\xymatrix{
1 \ar[r] &
C_\pi(P) \ar[d] \ar[r] &
N_\pi(P) \ar@{->>}[d]^{\rho_P} \ar[r]^\pi &
\Aut_\F(P) \ar[r] \ar@{=}[d] &
1
\\
1 \ar[r] &
Z(P) \ar[r] &
\Aut_\L(P) \ar[r]_\pi &
\Aut_\F(P) \ar[r] &
1
}
\]
which implies that $\Theta(P) \leq C_\pi(P)$ and that it is a complement of $Z(P)$ because we have seen that $\rho_P(Z(P)=Z(P)$.
For $g \in \pi$ such that $gPg^{-1} \leq S$ we obtain a commutative diagram
\[
\xymatrix{
1 \ar[r] &
\Theta(P) \ar[r] \ar[d] &
N_\pi(P) \ar[r] \ar[d]^{x \mapsto gxg^{-1}}_\cong &
\Aut_\L(P) \ar[r] \ar[d]^{\vp \mapsto \rho(g)\vp\rho(g)^{-1}}_\cong &
1
\\
1 \ar[r] &
\Theta(gPg^{-1}) \ar[r] &
N_\pi(gPg^{-1}) \ar[r] &
\Aut_\L(gPg^{-1}) \ar[r] &
1
}
\]
which shows that $g\,\Theta(P)\, g^{-1} = \Theta(gPg^{-1})$.

Now suppose that $P \leq Q$ and set $N=N_\pi(P) \cap N_\pi(Q)$.
Set $\hat{e}=\de(e) \in \Aut_\L(P,Q)$ where $e \in N_S(P,Q)$ is the identity element.
Let $\Aut_\L(Q,\darrow_P)$ be the subgroup of $\Aut_\L(Q)$ of the automorphisms $\vp$ which restrict to an automorphism of $P$ in the sense that there exists $\vp' \in \Aut_\L(P)$ such that $\vp \circ \hat{e} = \hat{e} \circ \vp'$.
Note that $\vp'$ is unique because $\hat{e}$ is a monomorphism in $\L$ by \cite[Corollary 3.10]{BCGLO}.
Similarly, let $\Aut_\L(P,\uparrow^Q)$ denote the subgroup of $\Aut_\L(P)$ of the automorphisms $\vp'$ which extend to an automorphism of $Q$ in the sense that there exists $\vp \in \Aut_\L(Q)$ such that $\vp \circ \hat{e} = \hat{e} \circ \vp'$.
Note that $\vp$ is unique because $\hat{e}$ is an epimorphism in $\L$, \cite[Corollary 3.10]{BCGLO}.
Thus $\vp \mapsto \vp'$ sets up an isomorphism $\Aut_\L(Q,\darrow_P) \to \Aut_\L(P,\uparrow^Q)$.
Note that if $\vp = \rho_Q(g)$ for some $g \in N_\pi(Q)$ then $g \in N$ and $\vp'=\rho_P(g)$.

Clearly, the preimage under $\rho$ of $\Aut_\L(Q,\darrow_P)$ in $\Aut_\T(Q)=N_\pi(Q)$ is $N$ as is the preimage of $\Aut_\L(P,\uparrow^Q)$ in $\Aut_\T(P)=N_\pi(P)$.
Also, $C_\pi(Q) \leq N$.
We obtain 
\[
\xymatrix{
1  \ar[r] &
\Theta(Q) \ar[r] \ar[d] &
N \ar[r]^(0.3){\rho_Q} \ar@{=}[d] &
\Aut_\L(Q,\darrow_P) \ar[d]_\cong^{\vp \mapsto \vp'} \ar[r] &
1
\\
1 \ar[r] &
\Theta(P) \cap N \ar[r] &
N \ar[r]_(0.3){\rho_P} &
\Aut_\L(P,\uparrow^Q) 
\ar[r] &
1
}
\]
We deduce that $\Theta(Q) =\Theta(P) \cap N \leq \Theta(P)$, hence $\Theta$ is a signaliser functor.

\ref{thmA:retract}
Fix some $\vp \in \F(P,Q)$.
There is some $g \in \pi$ such that $\vp$ is the restriction of $c_g \in \Inn(\pi)$ to $P$ because $\F = \F_S(\pi)$.
Since $Bc_g$ is homotopic to the identity on $B\pi$, then following square commutes up to homotopy
\[
\xymatrix{
BP \ar[r]^{Bi_P^S} \ar[d]_{B\vp} &
BS \ar[r]^{Bi} & 
B \pi \ar@{=}[d]^{Bc_g \simeq \id}
\\
BQ \ar[r]_{Bi_P^S} & 
BS \ar[r]_{Bi} &
B\pi.
}
\]
Applying $H^*(-;\FF_p)$ to this diagram for every $\vp \in \F^c$ shows that that the image of $H^*(B\pi;\FF_p) \xto{Bi^*} H^*(BS;\FF_p)$ is contained in the ring of stable elements $H^*(\F^c;\FF_p)$, namely the subring of all the classes $\al \in H^*(S;\FF_p)$ such that $\vp^*(\al|_Q)=\al|_P$ for all $\vp \in \F^c(P,Q)$. 

Now we consider $BS \xto{Bi} B\pi \xto{f} \pcomp{|\L|}$ which is homotopic to $\theta$.
By \cite[Theorem 5.8]{BLO2} $\theta^*$ is an isomorphism of $H^*(\pcomp{|\L|};\FF_p)$ onto $H^*(\F^c) \leq H^*(BS;\FF_p)$.
But $\theta^*=(Bi)^* \circ f^*$ so $Bi^*$ is left inverse of $f^*$, both are $\FF_p$-algebra homomorphisms.
\end{proof}


Ragnarsson defined in \cite{Ragnarsson} a classifying spectrum $\BB\F$ for any saturated fusion system $\F$.
It is equipped with a structure map $\si_\F \colon \BB S \to \BB\F$ where $\BB S=\Sigma^\infty BS$.
In the presence of a linking system, $\BB\F \simeq \Sigma^\infty \pcomp{|\L|}$ and $\si_\F=\Sigma^\infty \theta$, see display \eqref{eq:theta}.
He showed that the assignment of $\F \mapsto \BB\F$ is a functor from the category of fusions systems to the homotopy category of spectra.
In addition he showed that $\F$ can be recovered from $(\BB \F,\si_\F)$ in the sense that the morphisms in $\F$ are exactly the group monomorphisms $\vp \colon P \to Q$ such that $\BB P \xto{\BB \vp} \BB S \xto{\si_\F} \BB \F$ is homotopic to $\BB P \xto{\BB i} \BB S \xto{\si_\F} \BB \F$ where $\BB P = \Sigma^\infty BP$.
Compare this with the construction in \S\ref{subsec:sfs-homotopy-theoretic-constructions}.

Here is a variant of Theorem \ref{thmA}\ref{thmA:induce-fusion}.
Note that while linking systems are not known to exists, the classifying spectra of saturated fusion systems always exist.

\begin{prop}\label{prop:stable-fusion}
Suppose $\F$ is a saturated fusion system on $S$ and that $S$ is a Sylow $p$-subgroup of some group $\pi$.
Assume that there exists a map $f \colon B\pi \to \Omega^\infty \BB \F$ whose restriction to $BS$ is homotopic to the right adjoint $\si_\F^\# \colon BS \to \Omega^\infty \BB \F$ of $\si_\F \colon \BB S \to \BB\F$.
Then $\F_S(\pi) \subseteq \F$.
\end{prop}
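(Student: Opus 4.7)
The plan is to mimic the proof of Theorem \ref{thmA}\ref{thmA:induce-fusion} but translated from the unstable setting (maps into $\pcomp{|\L|}$) to the stable setting (spectrum maps into $\BB\F$), using the $\Sigma^\infty \dashv \Omega^\infty$ adjunction as the bridge between the given map $f$ and Ragnarsson's characterization of $\F$ in terms of $\si_\F$.

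First I would pick an arbitrary morphism $\vp \in \F_S(\pi)(P,Q)$. By definition of $\F_S(\pi)$, there is some $g \in \pi$ with $gPg^{-1} \leq Q$ such that $\vp = c_g|_P$. Writing $i \colon S \to \pi$ and $i_P^\pi \colon P \to \pi$ for the inclusions, we have $i \circ \vp = c_g \circ i_P^\pi$ as homomorphisms $P \to \pi$. Since $Bc_g$ is homotopic to the identity on $B\pi$, this yields a homotopy
\[
Bi \circ B\vp \simeq Bi_P^\pi \colon BP \to B\pi.
\]
Post-composing with $f$ and using the hypothesis $f \circ Bi \simeq \si_\F^\#$ twice (once for $\vp$ and once for $i_P^S$), I obtain
\[
\si_\F^\# \circ B\vp \simeq f \circ Bi \circ B\vp \simeq f \circ Bi_P^\pi \simeq f \circ Bi \circ Bi_P^S \simeq \si_\F^\# \circ Bi_P^S
\]
as maps $BP \to \Omega^\infty \BB\F$.

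Next I would apply the $\Sigma^\infty \dashv \Omega^\infty$ adjunction. Naturality of the adjunction bijection $[BP, \Omega^\infty\BB\F] \cong [\BB P, \BB\F]$ turns the displayed homotopy into an equality of stable homotopy classes
\[
\si_\F \circ \BB\vp = \si_\F \circ \BB i_P^S \quad \text{in } [\BB P, \BB\F],
\]
where I write $\BB\vp$ for $\Sigma^\infty$ applied to $BP \xto{B\vp} BQ \hookrightarrow BS$. Ragnarsson's characterization of morphisms in $\F$ via $(\BB\F, \si_\F)$, as recalled just before the proposition, now gives $\vp \in \F(P,Q)$. Since $\vp$ was arbitrary, $\F_S(\pi) \subseteq \F$.

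The only step that requires any care is the adjunction argument: one must be sure that Ragnarsson's stable criterion genuinely transports to a criterion about the adjoint map $\si_\F^\#$ on the space level, so that the given unstable factorization of $\si_\F^\#$ through $B\pi$ suffices. Once this is granted the argument is immediate, and notably it does not require any centric linking system to exist, which is the point of the proposition.
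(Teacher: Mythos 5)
Your proof is correct and is essentially the paper's argument unfolded: the paper's own proof appeals to Proposition \ref{prop:topological-linking-of-bpi} ($\F_S(Bi)=\F_S(\pi)$) and the functoriality of $\F_S(-)$ under post-composition to reduce to showing $\F_S(\si_\F^\#)=\F$ via the $\Sigma^\infty\dashv\Omega^\infty$ adjunction and Ragnarsson's characterization, which is exactly what you do, just spelled out morphism by morphism rather than by citing those two general facts.
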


\begin{proof}
By Proposition \ref{prop:topological-linking-of-bpi} and the constructions in \S\ref{subsec:sfs-homotopy-theoretic-constructions}, it only remains to prove that $\F=\F_S(\si_\F^\#)$.
This is straightforward from the definition of $\F_S(\si_\F^\#)$ and the adjunction $[ \BB P , \BB \F] \cong [\Sigma^\infty BP , \BB \F] \cong [BP,\Omega^\infty \BB \F]$.
\end{proof}

%
%
%
\section{Graphs and homotopy colimits}

\subsection{Categories of dimension $1$}\label{ss:cat-dim-one}

A small category $\C$ is called $1$-dimensional if its nerve is $1$-dimensional.
Thus, in any pair of composable morphisms in $\C$ at least one of them is an identity morphism.
Equivalently, there is a partition $\Obj(\C)=C_1 \coprod C_2$ such that if $\vp \colon x \to y$ is a non-identity morphism in $\C$ then $x \in C_1$ and $y \in C_2$.

Since there are no non-trivial compositions in $\C$,  it is clear that if $\D$ is any category then any assignment $F \colon \Obj(\C) \to \Obj(\D)$ together with an assignment of a morphism $F(c_1) \to F(c_2)$ to any \emph{non-identity} morphism $c_1 \to c_2$ in $\C$, gives rise to a functor $F \colon \C \to \D$.

\begin{prop}\label{prop:1-diml-htpy-nat-maps}
Let $\C$ be a $1$-dimensional category and let $F,F' \colon \C \to \Top$ be functors whose values are CW-complexes.
Assume that there is an isomorphism $q \colon F \xto{\simeq} F'$ in the \emph{homotopy category of spaces $\hotop$}.
Then there is a homotopy equivalence $\bar{q} \colon \hocolim F \xto{\simeq} \hocolim F'$ with the property that the squares 
\[
\xymatrix{
F(c) \ar[r] \ar[d]_{q_c}^{\simeq} &
\hocolim F \ar[d]^{\bar{q}}_{\simeq}
\\
F'(c) \ar[r]  &
\hocolim F'
}
\] 
commute up to homotopy for any $c \in \C$.
\end{prop}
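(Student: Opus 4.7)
The plan is to \emph{rectify} the hoTop-natural transformation $q$ into a strict natural transformation of Top-valued functors, exploiting the fact that $\C$ has no non-trivial compositions so no higher coherences need to be verified. Choose representatives $q_c \colon F(c) \to F'(c)$ in Top of the given isomorphisms in hoTop (each is a genuine homotopy equivalence). For each non-identity morphism $\vp \colon c_1 \to c_2$ of $\C$, naturality of $q$ in hoTop provides a homotopy $H_\vp \colon F(c_1) \times [0,1] \to F'(c_2)$ from $q_{c_2} \circ F(\vp)$ to $F'(\vp) \circ q_{c_1}$; pick one.

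Next, define $\tilde F \colon \C \to \Top$ by $\tilde F(c) = Z_c$, the mapping cylinder of $q_c$, which contains $F(c)$ and $F'(c)$ as deformation retracts. For each non-identity $\vp \colon c_1 \to c_2$, define $\tilde F(\vp) \colon Z_{c_1} \to Z_{c_2}$ by gluing three pieces: $F(\vp)$ on the $F(c_1)$-end, $F'(\vp)$ on the $F'(c_1)$-end, and $H_\vp$ (suitably reparametrized) on the interpolating cylinder, with a direct check that they match continuously at the mapping-cylinder identifications. Set $\tilde F(\id_c) = \id_{Z_c}$. By \S\ref{ss:cat-dim-one}, since $\C$ is $1$-dimensional this data assembles into a \emph{strict} functor without any composition relations to verify. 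Moreover, the inclusions $F(c) \hookrightarrow Z_c \hookleftarrow F'(c)$ are strictly natural in $c$ and each is a levelwise deformation retract, hence a levelwise homotopy equivalence.

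Since everything is CW and the inclusions are cofibrations, the standard homotopy invariance of Bousfield--Kan hocolims yields homotopy equivalences $\hocolim F \overset{\simeq}{\longrightarrow} \hocolim \tilde F \overset{\simeq}{\longleftarrow} \hocolim F'$; define $\bar q$ as the first map composed with a homotopy inverse of the second. The required homotopy-commuting squares follow from the strict naturality of $F, F' \hookrightarrow \tilde F$ together with the observation that the two maps $F(c) \hookrightarrow Z_c$ and $F(c) \xto{q_c} F'(c) \hookrightarrow Z_c$ are homotopic inside $Z_c$. The main conceptual obstacle is that rectifying a hoTop-natural transformation into a strict one generally requires higher coherence data, but $1$-dimensionality of $\C$ eliminates all such constraints; what remains is the piecewise construction of $\tilde F(\vp)$ and the continuity check at the gluing seams, which is routine.
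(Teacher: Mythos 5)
Your proof is correct but takes a genuinely different route from the paper's. The paper fixes the partition $\Obj(\C)=C_1\coprod C_2$ into sources and targets of non-identity morphisms and builds a zigzag $F\to H_0\to H\leftarrow H_1\to F'$ through three auxiliary functors $H_0,H_1,H$ that ``mix'' $F$- and $F'$-values on $C_1$ versus $C_2$ (with $H$ a cylinder-type functor carrying the chosen homotopies). You instead rectify via a single auxiliary functor $\tilde F$ built from levelwise mapping cylinders $Z_c$ of $q_c$, giving the shorter zigzag $F\hookrightarrow\tilde F\hookleftarrow F'$ with both arrows strictly natural levelwise deformation retracts. Both arguments lean on the same crucial fact, namely that $1$-dimensionality of $\C$ means the auxiliary assignments on objects and non-identity morphisms automatically assemble into functors with no composition coherences to verify. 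Your approach is more conceptual and produces a shorter zigzag, whereas the paper's avoids the piecewise gluing construction of $\tilde F(\vp)$ on the mapping-cylinder seams. One small imprecision to flag: describing $\tilde F(\vp)$ on the cylinder part of $Z_{c_1}$ as ``$H_\vp$ suitably reparametrized'' glosses over the need to first ride up the cylinder direction of $Z_{c_2}$ from $(F(\vp)(x),0)$ to $q_{c_2}(F(\vp)(x))\in F'(c_2)$ and only then apply $H_\vp$; without this two-stage reparametrization the map does not agree with $F(\vp)$ on the $F(c_1)$-end, so the inclusion $F\hookrightarrow\tilde F$ would fail to be strictly natural. The fix is routine, as you note, but worth stating explicitly.
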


\begin{proof}
Let $C_1 \coprod C_2$ be a partition of $\Obj(\C)$ as described above.
The isomorphism $q \colon F \to F'$ in $\hotop$ gives rise to the following maps which we will fix once and for all.
Homotopy equivalences $q_c \colon F(c) \xto{\simeq} F'(c)$ for every object $c \in \C$ and, for every non-identity morphism $\vp \colon c_1 \to c_2$ in $\C$, a homotopy $h_\vp \colon F(c_1) \times I \to F'(c_2)$ from $q_{c_2} \circ F(\vp)$ to $F'(\vp) \circ q_{c_1}$.

Define functors $H, H_0 , H_1 \colon \C \to \Top$ by describing their effect on objects of $C_1$ and $C_2$ and on non-identity morphisms as follows
\[
\begin{array}{lll}
H_0 \colon c_1 \mapsto F(c_1), &
H_0 \colon c_2 \mapsto F'(c_2), &
H_0(c_1 \xto{\vp} c_2) = q_{c_2} \circ F(\vp).
\\
H_1 \colon c_1 \mapsto F(c_1), &
H_1 \colon c_2 \mapsto F'(c_2), &
H_1(c_1 \xto{\vp} c_2) = F'(\vp) \circ q_{c_1}
\\
H \colon c_1 \mapsto F(c_1) \times I, &
H \colon c_2 \mapsto F'(c_2), &
H(c_1 \xto{\vp} c_2) = h_\vp.
\end{array}
\]
Now we will define natural transformations, all of which are homotopy equivalences:
\[
F \xrightarrow[\simeq]{(\id,q_2)} H_0 \xrightarrow[\simeq]{(i_0,\id)} H \xleftarrow[\simeq]{(i_1,\id)} H_1 \xrightarrow[\simeq]{(q_1,\id)} F'
\]
On objects of $C_1$ and $C_2$ the effect of these natural transformations is described by the rows of the diagram below and the columns show the effect of $F, H_0, H, H_1$ and $F'$ on a non-identity morphism $\vp$.
The commutativity of all the squares shows that these are indeed natural transformations.
\[
\xymatrix{
F(c_1) \ar[r]^{\id} \ar[d]_{F(\vp)} &
F(c_1) \ar[r]^{i_0} \ar[d]^{q_{c_2}\circ F(\vp)} &
F(c_1) \times I \ar[d]^{h_\vp} &
F(c_1) \ar[l]_{i_1} \ar[r]^{q_{c_1}} \ar[d]^{F'(\vp) \circ q_{c_1}} &
F'(c_1) \ar[d]^{F'(\vp)}
\\
F(c_2) \ar[r]^\simeq_{q_{c_2}} &
F'(c_2) \ar[r]_{\id} &
F'(c_2) &
F'(c_2) \ar[l]^{\id} \ar[r]_{\id} &
F'(c_2)
}
\]
We now obtain  homotopy equivalences
\[
\hhocolim{\C}F \xto{\simeq} \hhocolim{\C} H_0 \xto{\simeq} \hhocolim{\C} H 
\xleftarrow{\simeq} \hhocolim{\C} H_1 \xto{\simeq} \hhocolim{\C} F'.
\]
It is also clear from the commutative ladder above that the second diagram in the statement of this proposition is homotopy commutative for all $c \in \C$.
\end{proof}

\subsection{Graphs and their associated categories}
\label{subsec:graphs-associated-categories}

A special class of $1$-dimensional categories is associated to graphs.
Recall from Serre \cite{Serre:trees} that a graph $Y$ consists of the following data.
A set $V$ of vertices also denoted $\vertx(Y)$ and a set $E$ of (directed) edges denoted $\edge(Y)$.
There is a function $E \xto{y \mapsto (o(y),t(y)) } V \times  V$ whose components are the \emph{origin} and \emph{terminus} of an edge $y$.
It is also equipped with an involution $E \xto{y \mapsto \bar{y}} E$ which has no fixed points and ``reverses the  direction'' of an edge, namely $o(\bar{y})=t(y)$ and $t(\bar{y})=o(y)$.
The orbits of this involution $[y]$ are called ``geometric edges''.
The set of the geometric edges of $Y$ is denoted $[\edge(Y)]$.

A \emph{walk} in $Y$ from a vertex $u$ to a vertex $v$ is a sequence $y_1,\dots,y_n$ of edges such that $o(y_1)=u$ and $t(y_n)=v$ and $o(y_{i+1})=t(y_i)$ for all $i$.
A \emph{loop} is a walk from a vertex to itself.
A \emph{path} from $u$ to $v$ is a walk which contains no loops.

An \emph{orientation} of $Y$ is a choice $A\subseteq E$ of one representative from each geometric edge.
Given an orientation $A$ we will write $|y|$ for either the edge $y$ or $\bar{y}$ which belongs to $A$.

Any graph $Y$ gives rise to a small category $\C_Y$ whose object set is the disjoint union of $\vertx(Y)$ and $[\edge(Y)]$.
Apart from identity morphisms there is a unique morphism $[y] \xto{y} t(y)$ for any $y \in \edge(Y)$.

\subsection{Graphs of groups}

Let $Y$ be a graph.
Recall from \cite[Def. 8 in \S4.4]{Serre:trees} that a graph of groups $(G,Y)$ is an assignment of groups $G_v$ to every $v \in \vertx(Y)$ and $G_y$ to every $y \in \edge(Y)$ such that $G_y=G_{\bar{y}}$ and every edge is associated with a group monomorphism $G_y \to G_{t(y)}$ which is denoted $a \mapsto a^y$ (here $a \in G_y$ and $a^y$ is its image in $G_{t(y)}$, \cite[pp. 41--42]{Serre:trees}).

We obtain an associated functor $\G \colon \C_Y \to \{\Gps\}$ defined by
\[
\G(v)=G_v, \qquad \G([y])=G_{y}, \qquad \G([y] \xto{y} t(y)) = G_y \xto{a \mapsto a^y} G_{t(y)}.
\]
Clearly, $\G$ carries morphisms in $\C_Y$ to group monomorphisms.
Conversely, any functor $\G \colon \C_Y \to \{\Gps\}$ with this property defines a graph or groups $(G,Y)$.

\subsection{The fundamental group}\label{subsec:universal-cover}

Fix a graph of groups $(G,Y)$. 
Let $F(\edge(Y))$ denote the free group whose generating set is $\edge(Y)$ and set 
\[
\tilde{F}(G,Y) = F(\edge(Y)) * \big( \underset{v \in \vertx Y}{*} G_v \big).
\]
Define $F(G,Y)$ as the quotient group
\[
F(G,Y) = \tilde{F}(G,Y) \Big/ \langle \bar{y} = y^{-1}, ya^y y^{-1}=a^{\bar{y}} \rangle
\]
for all $y \in \edge(Y)$ and all $a \in G_y=G_{\bar{y}}$.
Let $T$ be a maximal tree in $Y$.
The \emph{fundamental group} of $(G,Y)$ is defined as the factor group, see \cite[p. 42]{Serre:trees}
\[
\pi_1(G,Y,T)=F(G,Y)/\langle y=1 \colon y \in T \rangle.
\]
The image of $y$ in this group is denoted $g_y$.
It is shown in \cite[p. 44]{Serre:trees} that different choices of $T$ yield isomorphic groups.
By \cite[Cor. 1, p. 45]{Serre:trees}, the inclusions $G_v \leq F(G,Y)$,  for any $v \in \vertx(Y)$, induce injections 
\[
G_v \hookrightarrow \pi_1(G,Y,T).
\]
It also follows from the definitions that for any $y \in \edge Y$ and for any $a \in G_y$ 
\[
g_y \cdot a^y \cdot g_y{}^{-1} = a^{\bar{y}}.
\]
Thus $g_y$ conjugates the image of $G_y \xto{a \mapsto a^y} G_{t(y)}$ onto the image of $G_y \xto{a \mapsto a^{\bar{y}}} G_{o(y)}$.

\subsection{Homotopy colimits and graphs of groups}\label{subsec:hocolim-groups}

By applying the classifying-space functor $B(-)$  to  the functor $\G \colon \C_Y \to \{\Gps\}$ associated to $(G,Y)$ we obtain a functor $B\G \colon \C_Y \to \Top$.
The following result was observed by Dror-Farjoun in \cite[\S 4.1]{DF04}.

\begin{prop}
Consider $\G\colon \C_Y \to \Top$ associated  with  $(G,Y)$.
Then
\[
\hocolim_{\C_Y} B\G \simeq B\pi
\]
where $\pi=\pi_1(G,Y,T)$ for some choice of a maximal tree $T$ and the maps $BG_v \to \hocolim_{\C_Y} B\G \simeq B\pi$ are homotopic to $B(\incl_{G_v}^\pi)$. 
\end{prop}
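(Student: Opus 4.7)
The plan is to identify $\hocolim_{\C_Y} B\G$ explicitly with the total space of the graph of spaces associated to $(G,Y)$ in the spirit of Bass-Serre theory, and then to deduce both the homotopy type and the description of the structure maps. Since $\C_Y$ is a $1$-dimensional category in the sense of \S\ref{ss:cat-dim-one}, with partition $\Obj(\C_Y) = [\edge(Y)] \sqcup \vertx(Y)$ (the edge classes being the sources and the vertices the targets of all non-identity morphisms), the Bousfield-Kan formula for $\hocolim_{\C_Y} B\G$ reduces, up to homotopy equivalence, to a double-mapping-cylinder construction. Explicitly, it produces the space
\[
X \;\defsym\; \Big( \coprod_{v \in \vertx Y} BG_v \;\sqcup\; \coprod_{[y] \in [\edge Y]} BG_y \times [0,1] \Big) \big/ \!\sim,
\]
where for each geometric edge $[y]$ the end $BG_y \times \{0\}$ is glued via $B(a \mapsto a^{\bar y})$ to $BG_{o(y)}$ and $BG_y \times \{1\}$ via $B(a \mapsto a^y)$ to $BG_{t(y)}$. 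The canonical structure map $BG_v \to X$ is the inclusion of the corresponding summand.

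Next I would verify two properties of $X$. First, $X$ is aspherical: each $BG_v$ and each cylinder $BG_y \times [0,1]$ is aspherical, and the attaching maps $BG_y \to BG_v$ are $\pi_1$-injective since the structure maps $a \mapsto a^y$ of the graph of groups are group monomorphisms by assumption. A concrete way to conclude is to construct the universal cover $\widetilde{X}$ as a tree of contractible spaces over the Bass-Serre tree of $(G,Y,T)$: place a copy of $EG_v$ above each vertex and a copy of $EG_y \times [0,1]$ above each edge, glued via the monomorphisms. Since the underlying tree is contractible, an iterated Mayer-Vietoris argument on this covering shows $\widetilde{X}$ is contractible.

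Second, $\pi_1(X)$ is isomorphic to $\pi_1(G,Y,T)$. Fixing a base vertex $v_0$, I would apply van Kampen iteratively. Collapsing the mapping cylinders over the edges of the maximal tree $T$ first yields the free product $\underset{v \in \vertx Y}{*} G_v$ as the fundamental group of the resulting subspace. Re-attaching the mapping cylinders for the remaining edges $y \notin T$ introduces one generator $g_y$ per geometric edge together with the relations $\bar y = y^{-1}$ and $g_y a^y g_y^{-1} = a^{\bar y}$. This is exactly the presentation of $\pi_1(G,Y,T)$ recalled in \S\ref{subsec:universal-cover}, so $X \simeq B\pi$ with $\pi = \pi_1(G,Y,T)$.

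Finally, the identification of the structure maps is built into the van Kampen step: the inclusion $BG_v \hookrightarrow X$ realises on $\pi_1$ precisely the injection $G_v \hookrightarrow \pi$ of \cite[Cor.~1, p.~45]{Serre:trees}, and since both $BG_v$ and $B\pi$ are aspherical, this determines the map up to homotopy, so it agrees with $B(\incl_{G_v}^\pi)$. The main obstacle I anticipate is the asphericity of $X$; the Bass-Serre tree construction above is the cleanest route and is essentially the content of the Dror-Farjoun observation cited in the statement.
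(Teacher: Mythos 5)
Your overall strategy — identifying the Bousfield--Kan homotopy colimit over the $1$-dimensional category $\C_Y$ with the total space $X$ of the associated graph of spaces, then computing $\pi_1(X)$ by van Kampen and verifying asphericity via a Bass--Serre tree of contractible pieces — is sound and standard. The paper itself gives no proof (it only cites Dror-Farjoun), so your proposal is in fact supplying the details the paper omits; the identification of the hocolim as the double-mapping-cylinder space $X$ and the asphericity argument are both correct, and the final homotopy-uniqueness argument for the structure maps $BG_v \to B\pi$ is fine once $X$ is known to be aspherical with the right $\pi_1$.

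There is, however, a genuine error in the van Kampen step. You claim that the subspace built from the vertex spaces $BG_v$ together with the cylinders over the maximal-tree edges has fundamental group the \emph{free} product $\underset{v \in \vertx Y}{*} G_v$. This is false unless the edge groups $G_y$ are all trivial: van Kampen applied to a tree of spaces with $\pi_1$-injective edge maps gives the \emph{iterated amalgamated} product of the $G_v$ over $T$, in which for each $y \in T$ the two images of $G_y$ inside $G_{o(y)}$ and $G_{t(y)}$ are identified. (In the presentation of $\pi_1(G,Y,T)$ recalled in \S\ref{subsec:universal-cover} the relations $y a^y y^{-1} = a^{\bar y}$ hold for \emph{all} edges; imposing $g_y = 1$ for $y \in T$ converts those into the amalgamation relations $a^y = a^{\bar y}$, and your presentation drops exactly these.) Nor is ``collapsing'' the cylinder $BG_y \times [0,1]$ to a point a homotopy equivalence when $G_y \neq 1$, so there is no reading of that step that salvages the free-product claim. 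Concretely, for $Y$ a single edge joining $v_1$ and $v_2$ (so $T = Y$ and there are no non-tree edges) your argument outputs $G_{v_1} * G_{v_2}$, whereas the proposition requires $G_{v_1} *_{G_y} G_{v_2}$. The fix is routine: compute $\pi_1$ of the tree-of-spaces subspace as the iterated amalgam over $T$, then treat each non-tree cylinder as an HNN-type attachment introducing the stable letter $g_y$ with relation $g_y a^y g_y^{-1} = a^{\bar y}$; this does reproduce Serre's presentation of $\pi_1(G,Y,T)$.
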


\begin{prop}\label{prop:sylows}
Let $(G,Y)$ be a graph of groups and suppose that
\begin{itemize}
\item[(i)]
The groups $G_v$ and $G_y$ contain Sylow $p$-subgroups $P_v$ and $P_y$ for every $v \in \vertx Y$ and every $y \in \edge Y$.

\item[(ii)]
There exists a vertex $v_0$ such that for any other vertex $u$ of $Y$ there exists a path (directed, without loops) $y_1,\dots,y_n$ from $v_0$ to $u$ such that for any $i$ the map $G_{y_i} \xto{a \mapsto a^{y_i}} G_{t(y_i)}$ carries $P_{y_i}$ onto a Sylow $p$-subgroup of $G_{t(y_i)}$.
\end{itemize}
Set $\pi=\pi_1(G,Y,T)$ 
and set $S=P_{v_0}$.
Then $\hocolim_{\C_Y} B\G \simeq B\pi$ and
\begin{enumerate}
\item
$S$ is a Sylow $p$-subgroup of $\pi$.
\label{prop:sylows:exist}

\item
If the groups $G_v$ are finite and if $Y$ contains no loops then 
\label{prop:sylows:finite}
\begin{enumerate}
\item
 $B\pi$ is a $p$-good space (\cite[I.\S 5]{Bous-Kan}), and 
\label{prop:sylows:finite:good}

\item
In the kernel $K$ of $H^*(B\pi;\FF_p) \xto{Bi^*} H^*(BS;\FF_p)$ the product of any two elements is zero. 
\label{prop:sylows:finite:ker}
\end{enumerate}
\end{enumerate}
\end{prop}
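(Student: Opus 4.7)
The plan is to prove~(1) by a Bass--Serre argument, and~(2) through the cofiber sequence arising from the $1$-dimensional homotopy colimit decomposition of the preceding proposition; the Sylow-preserving hypothesis~(ii) supplies a crucial cohomological edge-injectivity.

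For~(1), $\pi=\pi_1(G,Y,T)$ acts on its Bass--Serre tree with vertex stabilisers the $\pi$-conjugates of the $G_v$. Any finite $p$-subgroup $Q \leq \pi$ fixes a vertex (by Serre's lemma on finite subgroups acting on trees), hence is $\pi$-conjugate into some $G_v$, and then into $P_v$ by Sylow's theorem inside the finite group $G_v$. It remains to show each $P_v$ is itself $\pi$-conjugate into $S=P_{v_0}$. Inducting on the length of the directed path $y_1,\dots,y_n$ from $v_0$ to $v$ provided by~(ii), the defining relation $g_{y_i}\,a^{y_i}\,g_{y_i}^{-1}=a^{\bar y_i}$ from \S\ref{subsec:universal-cover} shows that $g_{y_i}$ conjugates the image of $P_{y_i}$ in $G_{v_i}$ (a Sylow of $G_{v_i}$, hence $G_{v_i}$-conjugate to $P_{v_i}$) onto the image of $P_{y_i}$ in $G_{v_{i-1}}$, which is a $p$-subgroup and therefore $G_{v_{i-1}}$-conjugate into $P_{v_{i-1}}$; composing with the inductive hypothesis yields the required $\pi$-conjugacy.

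For~(2), since $Y$ is a tree the category $\C_Y$ is $1$-dimensional and $B\pi\simeq\hocolim_{\C_Y}B\G$ admits a cofiber sequence
\[
F_0\hookrightarrow B\pi\xrightarrow{p} Q,
\]
where $F_0=\coprod_{v\in\vertx(Y)} BG_v$ and $Q\simeq\bigvee_{[y]\in[\edge(Y)]}\Sigma(BG_{[y]})_+$, the latter obtained by collapsing, for each geometric edge, both ends of its cylinder $BG_{[y]}\times I$ to the common basepoint. Hypothesis~(ii) implies that for every edge $y$ oriented away from $v_0$ the image of $G_y$ in $G_{t(y)}$ contains a Sylow $p$-subgroup, whence the restriction $H^*(BG_{t(y)};\FF_p)\to H^*(BG_{[y]};\FF_p)$ factors through this Sylow and is therefore injective.

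For~(2a), the spaces $F_0$ and $Q$ are $\FF_p$-good, being assembled from classifying spaces of finite groups via finite disjoint unions, wedges, and suspensions; the Bousfield--Kan machinery of \cite[I.\S 5]{Bous-Kan} then transfers $\FF_p$-goodness to $B\pi$, using that $B\pi$ has mod-$p$ cohomology of finite type and a contractible universal cover (so the relevant $\pi_1$-nilpotency condition is trivially satisfied). For~(2b), the cofiber sequence identifies $\tilde K:=\ker\bigl(H^*(B\pi;\FF_p)\to H^*(F_0;\FF_p)\bigr)$ with $\operatorname{Im}(p^*)$, and since $Q$ is a wedge of suspensions $\tilde H^{>0}(Q;\FF_p)$ has trivial cup products; as $p^*$ is a ring map, $\tilde K\cdot\tilde K=0$. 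It remains to check $K=\tilde K$: the inclusion $\tilde K\subseteq K$ is immediate from $BS\hookrightarrow BG_{v_0}\hookrightarrow F_0$, and for $\xi\in K$ Sylow injectivity first gives $\xi|_{BG_{v_0}}=0$. One then propagates along the directed paths of~(ii): whenever $\xi|_{BG_{o(y)}}=0$, the homotopy-commutativity in $B\pi$ of the two compositions $BG_{[y]}\to BG_{o(y)}\to B\pi$ and $BG_{[y]}\to BG_{t(y)}\to B\pi$ forces $\xi|_{BG_{t(y)}}|_{BG_{[y]}}=\xi|_{BG_{o(y)}}|_{BG_{[y]}}=0$, and the edge-injectivity from the previous paragraph then yields $\xi|_{BG_{t(y)}}=0$; induction on distance from $v_0$ gives $\xi\in\tilde K$. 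The main obstacle will be~(2a), where Bousfield--Kan's theory must be invoked carefully; the remaining parts reduce to the Bass--Serre action and the cofiber-sequence analysis with its suspension structure.
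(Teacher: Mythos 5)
Your proof of part~(1) follows the same Bass--Serre strategy as the paper (finite $p$-subgroup fixes a vertex of the Bass--Serre tree, then conjugate along the directed path using the relations $g_y a^y g_y^{-1}=a^{\bar y}$). One small wording slip: hypothesis~(i) does not say the $G_v$ are finite, so the phrase ``by Sylow's theorem inside the finite group $G_v$'' should instead invoke the defining property of $P_v$ as a Sylow $p$-subgroup in the sense of \S\ref{subsec:sfs-tansporter-and-linking-systems}; the conclusion is the same. Your proof of~(2b) is correct and is an attractive geometric reformulation of the paper's Bousfield--Kan spectral sequence argument: the cofiber sequence $F_0\hookrightarrow B\pi\to Q$ encodes the same information as the two-column $E_2$-page, the identification of $\ker(Bi^*)$ with $\operatorname{Im}(p^*)$ replaces the paper's injectivity of $E_2^{0,*}(\mathcal G)\to E_2^{0,*}(\mathcal P)$, and the propagation along edges via edge-injectivity replaces the paper's preparatory arrangement that $(P,Y)$ is a subgraph of $(G,Y)$ so that $\hocolim B\mathcal P\simeq BS$. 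Your route is, if anything, a bit cleaner in this step.

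The gap is in~(2a). There is no Bousfield--Kan theorem transferring $\FF_p$-goodness along a cofiber sequence, and the assertion that ``the relevant $\pi_1$-nilpotency condition is trivially satisfied'' because $B\pi$ has contractible universal cover is not correct: asphericity says nothing about nilpotency of the $\pi_1$-action, and in the cofiber-sequence framework there is no such condition to appeal to in the first place. The mechanism that actually works, and that the paper uses, is the nilpotent \emph{fibration} lemma applied to $BK\to B\pi\to B(\pi/K)$ where $K$ is the normal subgroup generated by elements of order prime to $p$. This requires several nontrivial verifications: that hypothesis~(ii) forces $\pi=KS$ so that $\pi/K$ is a finite $p$-group (this is exactly where the Sylow-preservation along edges is used --- drop it and $\pi/K$ need not even be finite, so $B\pi$ can fail to be $p$-good); that $K$ is $p$-perfect so $BK$ is $p$-good; that $H_*(B\pi;\FF_p)$ is of finite type (established via the Bousfield--Kan homology spectral sequence, not assumed); that $H_*(BK;\FF_p)$ is of finite type (established via Dwyer's strong convergence of the Eilenberg--Moore spectral sequence for $BK\to B\pi\to B(\pi/K)$); and that the action of $\pi/K$ on $H^*(BK;\FF_p)$ is nilpotent (which follows because $\pi/K$ is a finite $p$-group acting on finite-dimensional $\FF_p$-modules, cf.\ \cite[II.5.2]{Bous-Kan} --- not from anything about the universal cover). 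Your two correctly-identified ingredients --- finite-type cohomology and nilpotency --- therefore both need actual proofs and live on the fibration side, not the cofibration side; as written, (2a) is not established.
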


\begin{proof}
\ref{prop:sylows:exist}.
By \cite[I.\S 5.3]{Serre:trees} the group $\pi$ acts without inversions on a tree $\tilde{X}$ such that the isotropy groups of the vertices of $\tX$ are conjugate to the subgroup $G_v$ of $\pi$ where $v \in \vertx Y$.
If $P$ is a finite $p$-subgroup of $\pi$ then by \cite[Prop. 19, \S I.4.3]{Serre:trees} it fixes a vertex of $\tX$ and it is therefore conjugate in $\pi$ to a subgroup of $G_v$ for some vertex $v$.
We therefore assume that $P \leq G_v$.

By hypothesis there is a a path $y_1,\dots,y_n$ from $v_0$ to $v$ with the properties listed in (ii).
We will use induction on $n$ to prove that $P$ is conjugate to a subgroup of $S$.
If $n=0$ then $P \leq G_{v_0}$ and the result holds because $S$ is a Sylow $p$-subgroup of $G_{v_0}$.
Suppose that $n >0$ and note that by hypothesis the image of $G_{y_n}$ in $G_{t(y_n)}=G_v$, which we denote by $G_{y_n}^{y_n}$, contains a Sylow $p$-subgroup.
Thus, by conjugating $P$ inside $G_v$ we may assume that $P \leq G_{y_n}^{y_n}$.
As we have seen in \S\ref{subsec:universal-cover} the element $g_{y_n} \in \pi$ conjugates $G_{y_n}^{y_n}$ onto $G_{y_n}^{\bar{y_n}} \leq G_{o(y_n)} = G_{t(y_{n-1})}$.
We deduce that $P$ is conjugate in $\pi$ to a subgroup of $G_{t(y_{n-1})}$ and we can now apply the induction hypothesis to the path $y_1,\dots,y_{n-1}$ and deduce that $P$ is conjugate in $\pi$ to a subgroup of $S$.

\ref{prop:sylows:finite}
Since $Y$ has no loops, it is a tree.
Orient the tree $Y$ in the only way so that there is a path from $v_0$ to any vertex of $Y$ which only uses edges from the orientation class $A$.
We now claim that by possibly replacing $(G,Y)$ with an isomorphic graph, the groups $P_v$ and $P_y$ can be chosen such that $(P,Y)$ is a sub-graph of $(G,Y)$, namely $G_y \xto{a \mapsto a^y} G_{t(y)}$ carries $P_y$ into $P_{t(y)}$.
In fact, by hypothesis (ii) $P_y \cong P_{t(y)}$ if $y \in A$.

To prove this, consider the poset $\T$ of the subtrees $Y'$ of $Y$ with root $v_0$ for which is it possible to replace $(G,Y)$ with an isomorphic tree of groups and to find Sylow $p$-subgroup $P_v \leq G_v$ and $P_y \leq G_y$ such that $(P,Y')$ is a subtree of $(G,Y')$.
First, $\T$ is not empty because clearly $\{v_0\} \in \T$.
Next, consider a maximal element $Y' \in \T$ and assume that $Y' \neq Y$.
Thus, we may assume, possibly by replacing $(G,Y)$ with an isomorphic tree, that $(P,Y')$ is a subtree of $(G,Y')$.
Choose an edge $y' \in A$ such that $v'=o(y') \in Y'$ and $v=t(y') \notin Y'$.

Let $G_{\bar{y'}}^{\bar{y'}}$ denote the image of $G_{y'}$ in $G_{o(y')}$ and let $P_{\bar{y'}}^{\bar{y'}}$ be the image of $P_{y'}$.
Then $g P_{\bar{y'}}^{\bar{y'}} g^{-1} \leq P_{o(y')}$ for some $g \in G_{o(y')}$ by the Sylow condition.
Let $c_g$ denote the inner automorphism $x \mapsto gxg^{-1}$.
By replacing $G_{y'}$ with $g G_{\bar{y'}}^{\bar{y'}} g^{-1}$ we obtain an isomorph $(\tilde{G},Y)$ of $(G,Y)$.
Now replace $P_{y'}$ with $g P_{\bar{y'}}^{\bar{y'}} g^{-1}$ and replace $P_{t(y')}$ with the image of $P_{y'}$ in $G_{t(y')}=G_v$ which is a Sylow $p$-subgroup by hypothesis (ii).
Note that $o(y') \neq t(y')$ so we have left the trees $(P,Y')$ and $(G,Y')$ unchanged.
By construction $(P,Y' \cup \{v\})$ is a subtree of $(\tilde{G},Y'\cup\{v\})$ which contradicts the maximality of $Y'$.
Thus, $Y \in \T$ which is what we wanted to prove.

\ref{prop:sylows:finite:good}.
Since $Y$ is a tree, $\pi$ is the amalgamated product of finite groups which is generated by the groups $G_v$.
Also notice that the images of $P_v$ in $\pi$  are subgroups of $S$ because $P_y \to P_t(y)$ are isomorphisms for $y \in A$ (consider all that rooted subtrees $Y' \subseteq Y$ for which $P_v \leq S$ and argue as above).
Let $K$ be the normal subgroup of $\pi$ generated by the elements of order prime to $p$.
Clearly $K$ contains $O^p(G_v)$ and therefore together with the groups $P_v$ it generates $\pi$.
Hence $\pi=KS$, so $\pi/K$ is a finite $p$-group. 

Now $BK$ is $p$-good by \cite[Proposition VII.3.2]{Bous-Kan} since $K$ is $p$-perfect \cite[VII.3.2]{Bous-Kan} because its abelianisation must be a torsion group with no elements of order $p$.
The Bousfield-Kan spectral sequence $E^2_{ij}=\varinjlim{}_i H_j(B\G;\FF_p) \Rightarrow H_{i+j}(B\pi;\FF_p)$ where $\G$ is the diagram of groups associated to $(G,Y)$, clearly consists of finitely generated $\FF_p$-modules and therefore $H_*(B\pi;\FF_p)$ is finite in every dimension.
The action of $\pi/K$ on $H^*(BK;\FF_p)$ must be nilpotent by \cite[II.5.2]{Bous-Kan} so by the strong convergence of the Eilenberg-Moore spectral sequence for the fibration $BK \to B\pi \to B\pi/K$ proven by Dwyer in \cite{Dwyer73}, we deduce that $H_*(BK;\FF_p)$ is finite in every dimension.
By \cite[Chapter II, 5.1 and 5.2]{Bous-Kan} we obtain a nilpotent fibration $\pcomp{BK} \to \pcomp{B\pi} \to B\pi/K$ in which both the base and the fibre are $p$-complete, whence so is the total space.
This shows that $B\pi$ is $p$-good.

\ref{prop:sylows:finite:ker}.
Let $\G, \P \colon \C_Y \to \Gps$ be the associated functors for $(G,Y)$ and $(P,Y)$.
Observe that $\hocolim_{\C_Y} B\P = BS$ because $P_y \to P_{t(y)}$ are isomorphisms for all $y \in A$ (consider the poset $\T$ of rooted subtrees $Y' \subseteq Y$ for which $\hocolim B\P|_{\C_{Y'}} \simeq BS$ and argue as above).
We obtain a natural transformation of functors $B\P \to B\G$ which gives rise to a morphism of Bousfield-Kan spectral sequences $E^{*,*}_2(\G) \to E^{*,*}_2(\P)$ where
\begin{eqnarray*}
E^{i,j}_2(\P) &=& \varprojlim_{\C_Y}{}^i H^j(B\P;\FF_p) \Rightarrow H^{i+j}(BS;\FF_p), \\
E^{i,j}_2(\G) &=& \varprojlim_{\C_Y}{}^i H^j(B\G;\FF_p) \Rightarrow H^{i+j}(B\pi;\FF_p)).
\end{eqnarray*}
Since $P_v \leq G_v$ and $P_y \leq G_y$ are Sylow $p$-subgroups and $G_v$ are finite groups, the maps $H^*(G_v;\FF_p) \to H^*(P_v;\FF_p)$ and $H^*(G_y;\FF_p) \to H^*(P_y;\FF_p)$ are injective.
Since $\varprojlim{}^0$ is left exact, $E_2^{0,*}(\G) \to E_2^{0,*}(\P)$ is injective.
Also $E_2^{i,*}(\P)=E_2^{i,*}(\G)=0$ for all $i \geq 2$ because $\C_Y$ is $1$-dimensional.
Therefore $E_\infty^{**}=E_2^{**}$.
We obtain the following diagram with exact rows from which it is clear that $K=\ker(Bi^*)$ is contained in $E_\infty^{1,*}(\G)$.
\[
\xymatrix{
0 \ar[r] & 
E^{1,*-1}_2(\G) \ar[r] \ar[d] &
H^*(B\pi) \ar[r] \ar[d]_{Bi^*} &
E_2^{0,*}(\G) \ar@{^(->}[d] \ar[r] & 
0
\\
0 \ar[r] &
E_2^{0,*-1}(\P) \ar[r] &
H^*(BS) 
\ar[r] &
E_2^{0,*}(\P) \ar[r] &
0
}
\]
It follows that $xy=0$ for any $x,y \in K$ because $xy \in E^{2,*}_\infty(\G)=0$.
\end{proof}

%
%
%
\section{Groups inducing fusion systems}\label{sec:examples}

\subsection{Robinson's construction - the normalizer decomposition}\label{subsec:robinsons-construction}

Fix a $p$-local finite group $(S,\F,\L)$ and a collection $\R$ of $\F$-centric subgroups which contains all the $\F$-centric $\F$-radical subgroups (e.g. all the $\F$-centric subgroups of $S$).
Consider the normalizer decomposition $\de \colon \bar{S}(\R) \to \Top$, see \S\ref{subsec:sfs-homotopy-theoretic-constructions}, and recall that in the homotopy category of spaces, $\de$ is isomorphic to the functor $[R_\bullet] \mapsto B\Aut_\L(R_\bullet)$.

Fix fully normalized representatives $R_0,R_1,\dots,R_n$ for the $\F$-conjugacy classes of $\R$ and choose for convenience $R_0=S$.
Let $\D$ be the subposet of $\bar{S}(\R)$ generated by the objects $[R_0],[R_1],\dots,[R_n]$ and $[R_1<S], \dots,[R_n<S]$.
It is easy to check that $\D=\C_Y$ where $Y$ is the tree of height $1$ with root $[S]$ and edges $[R_i<S]$ to the leaves $[R_i]$ ($i=1,\dots n$).
We obtain a tree of groups $(G,Y)$, see \S\ref{subse:sfs-normaliser-decomposition}
\begin{equation}\label{eq:robinson-type-amalgam}
\xymatrix{
  & 
\Aut_\L(S) \ar@{-}[dl]_{\Aut_\L(R_1<S)} \ar@{-}[d]  \ar@{-}[dr]^{\Aut_\L(R_n<S)}
\\
{\Aut_\L(R_1)} \ar@{.}[r] & \Aut_\L(R_i) \ar@{.}[r] & \Aut_\L(R_n)
}
\end{equation}
This tree of groups satisfies the conditions of Proposition \ref{prop:sylows} because the groups $R_i$ are fully normalized so $N_S(R_i)$ is a Sylow $p$-subgroup of both $\Aut_\L(R_i)$ and $\Aut_\L(R_i<S) = N_{\Aut_\L(S)}(R_i)$.
Also note that $S$ is a Sylow $p$-subgroup of $\Aut_\L(S)$.
Therefore $\hocolim B\G \simeq B\pi_R$ for a group $\pi_R$ which is the amalgamated product of the tree \eqref{eq:robinson-type-amalgam} and which contains $S$ as a Sylow $p$-subgroup.

The crux, now, is that $B\G$ is the functor $[R_\bullet] \mapsto B\Aut_\L(R_\bullet)$ and therefore it is isomorphic to $\de|_\D$ in $\hotop$.
Proposition \ref{prop:1-diml-htpy-nat-maps} now yields a composite map $f$
\[
B\pi_R \simeq \hocolim_\D B\G \xto{\simeq} \hocolim_{\D} \de|_\D \xto{\incl} \hocolim_{\bar{S}(\R)} \de \xto{\simeq} |\L| \to \pcomp{|\L| }
\]
whose restriction to $BS$ is homotopic to $\theta$ in display \eqref{eq:theta}.
We now claim that $\F \subseteq \F_S(\pi_R)$.

\begin{proof}[Proof of claim]
First, $\F_S(\pi_R)$ contains all the groups $\Aut_\F(R_i)$ because $\pi_R$ contains $\Aut_\L(R_i)$ for all $i=0,\dots,n$. 
Therefore $\F_S(\pi_R)$ contains the fusion system $\F'$ on $S$ generated by $\Aut_\F(R_i)$.
Clearly $\F' \subseteq \F$ and we next claim that, in fact $\F'=\F$.
To do this, we will prove by induction on $|S:P|$ that if $\vp \in \F(P,S)$ then $\vp \in \F'(P,S)$.
If $|S:P|=1$ then $P=S=R_0$ and therefore $\vp \in \Aut_\F(R_0) \subseteq \F'$.

Assume that $|S:P|>1$.
If $P$ is not $\F$-centric or not $\F$-radical then by Alperin's fusion theorem \cite[Theorem A.10]{BLO2} it is a composition of restriction of automorphisms of $\F$-centric $\F$-radical subgroups whose order must be strictly bigger than the order of $P$ and therefore, by the induction hypothesis, all these automorphisms must belong to $\F'$.
Hence $\vp \in \F$.

If $P$ is $\F$-centric $\F$-radical then it is $\F$-conjugate to $R_i$ for some $i$.
We now claim that $P$ is $\F'$-conjugate to $R_i$.
Indeed, fix some $\F$-isomorphism $\al \colon R_i \to P$.
By Alperin's fusion theorem $\al$ is the composition of restrictions of $\F$-automorphisms $\be_1,\dots,\be_k$ of some $\F$-centric subgroups $Q_1,\dots,Q_k$, thus  $P=\be_k( \dots (\be_1(R_i)) \dots)$.
If $|Q_j|=|P|$ for some $j$ then $\be_j$ does not move $\be_{j-1}(\dots \be_1(R_i)\dots)$ and by omitting it we obtain another $\F$-isomorphism $R_i \to P$.
Thus, by possibly replacing $\al$ with another $\F$-isomorphism, we may assume that $|Q_1|, \dots,|Q_k| > |P|$, whence by induction hypothesis $\be_1,\dots,\be_k \in \F'$ and therefore $\al \in \F'(R_i,P)$.

Finally, consider $\vp \in \F(P,S)$ where $P$ is $\F$-centric $\F$-radical.
Note that $P$ and $\vp(P)$ are $\F$-conjugate to $R_i$ for some $i$.
Choose $\F'$-isomorphisms $\al' \in \F'(R_i,P)$ and $\be' \in \F'(\vp(P),R_i)$.
Then $\be' \circ \vp \circ \al' \in \Aut_\F(R_i) \subseteq \F'$ and therefore $\vp \in \F'$.
This completes the induction step.
\end{proof}

We summarize this in the following proposition.

\begin{prop}\label{prop:thmA-conditions-R}
Fix a $p$-local finite group $(S,\F,\L)$.
Then the group $\pi_R$ constructed above as the amalgamated product of \eqref{eq:robinson-type-amalgam} contains $S$ as a Sylow $p$ subgroup, it is equipped with a map $f \colon B\pi_R \to \pcomp{|\L|}$ whose restriction to $BS$ is homotopic to $\theta$ and furthermore, $\F \subseteq \F_S(\pi_R)$.
\end{prop}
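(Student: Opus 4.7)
The plan is to observe that each of the three assertions has essentially been verified in the construction preceding the statement, and the proof amounts to assembling the pieces. For the Sylow statement, I would verify the hypotheses of Proposition \ref{prop:sylows} for the tree of groups $(G,Y)$ of \eqref{eq:robinson-type-amalgam}: take $v_0 = [S]$, so that $\Aut_\L(S)$ contains $S$ as a Sylow $p$-subgroup by the definition of a centric linking system; for each leaf $[R_i]$, the edge group $\Aut_\L(R_i<S)$ is $N_{\Aut_\L(S)}(R_i)$, and because $R_i$ was chosen fully normalized, $N_S(R_i)$ is a Sylow $p$-subgroup of both $\Aut_\L(R_i<S)$ and $\Aut_\L(R_i)$, mapped onto itself by the edge-to-leaf homomorphism. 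Condition (ii) of Proposition \ref{prop:sylows} is satisfied, so $\hocolim_\D B\G \simeq B\pi_R$ and $S$ is a Sylow $p$-subgroup of $\pi_R$.

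For the map $f$, I would invoke Proposition \ref{prop:1-diml-htpy-nat-maps}. Since $Y$ is a tree of height one, $\D = \C_Y$ is $1$-dimensional, and by the defining property of the normalizer decomposition recalled in \S\ref{subse:sfs-normaliser-decomposition}, the functors $B\G$ and $\de|_\D$ agree in $\hotop$. Proposition \ref{prop:1-diml-htpy-nat-maps} then yields a homotopy equivalence $\hocolim_\D B\G \xto{\simeq} \hocolim_\D \de|_\D$ with squares commuting up to homotopy at each object of $\D$. Composing with the natural map $\hocolim_\D \de|_\D \to \hocolim_{\bar S(\R)}\de$, the equivalence $\hocolim_{\bar S(\R)}\de \simeq |\L|$, and completion at $p$ produces $f$. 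The homotopy-commutative square at the root $[S]$ identifies $f|_{BS}$ with the map $BS \to \pcomp{|\L|}$ induced by $S \leq \Aut_\L(S) \subseteq \L$, which is $\theta$ of \eqref{eq:theta}.

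Finally, for $\F \subseteq \F_S(\pi_R)$, the groups $\Aut_\L(R_i)$ embed in $\pi_R$ by the injectivity statement for fundamental groups of graphs of groups recalled in \S\ref{subsec:universal-cover}, so $\F_S(\pi_R)$ contains the subsystem $\F'$ of $\F$ generated by the $\Aut_\F(R_i)$; I would then prove $\F' = \F$ by induction on $|S:P|$ applied to $\vp \in \F(P,S)$. If $P$ is not $\F$-centric $\F$-radical, Alperin's fusion theorem expresses $\vp$ as a composite of restrictions of automorphisms of $\F$-centric $\F$-radical groups strictly larger than $P$, all in $\F'$ by induction. The delicate case, and the main obstacle, is when $P$ is itself $\F$-centric $\F$-radical and thus $\F$-conjugate to some $R_i$: one must realize \emph{some} $\F$-isomorphism $R_i \to P$ inside $\F'$. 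The trick here is to apply Alperin's theorem to an arbitrary $\F$-isomorphism $\al \colon R_i \to P$, obtaining a composite $\be_k \circ \cdots \circ \be_1$ of restrictions of automorphisms of groups $Q_j$; any factor $\be_j$ with $|Q_j| = |P|$ fixes the intermediate image as a set and may be deleted, yielding a (possibly different) $\F$-isomorphism $R_i \to P$ built exclusively from factors with $|Q_j| > |P|$, covered by the induction hypothesis. Pre- and post-composing $\vp$ with the resulting $\F'$-isomorphisms reduces it to an element of $\Aut_\F(R_i) \subseteq \F'$, closing the induction.
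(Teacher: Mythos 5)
Your proposal is correct and follows essentially the same route as the paper: verify the Sylow hypotheses of Proposition~\ref{prop:sylows} via full normalization of the $R_i$, assemble $f$ by combining Proposition~\ref{prop:1-diml-htpy-nat-maps} with the inclusion $\D \hookrightarrow \bar{S}(\R)$ and the normalizer decomposition, and prove $\F \subseteq \F_S(\pi_R)$ by induction on $|S:P|$ with the same deletion trick for Alperin factors of size $|P|$.
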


We can now apply Theorem \ref{thmA}\ref{thmA:induce-fusion} to $\pi_R$ and deduce that $\F=\F_S(\pi_R)$.
Corollary \ref{corC} holds for $\pi_R$ because the implication \ref{corC:2}$\Rightarrow$\ref{corC:1} is a triviality and \ref{corC:1}$\Rightarrow$\ref{corC:2} follows from point \ref{thmA:signaliser} of Theorem \ref{thmA}.

Proposition \ref{propB} for $\pi_R$ follows from Proposition \ref{prop:sylows}\ref{prop:sylows:finite} because the graph of groups \eqref{eq:robinson-type-amalgam} has no loops.

\begin{remark}\label{rem:R-avoid-linking-system}
The groups which appear in the amalgam \eqref{eq:robinson-type-amalgam} can be constructed directly from $\F$ even without the existence of an associated linking system.
Thus, the group $\pi_R$ can be constructed for any saturated fusion system.

Indeed, since the groups $R_i$ are $\F$-centric the normalizer fusion systems $\F_i:=N_\F(R_i)$, see \cite[\S 6]{BLO2}, are constrained in the sense of \cite[Proposition  C]{BCGLO}.
Therefore $\F_i$ admit unique linking systems $\L_i$ and  there are canonically defined finite groups $L_i$ which contain $N_S(R_i)$ as a Sylow $p$-subgroup such that $\F_i=\F_{N_S(R_i)}(L_i)$.
In fact $L_i= \Aut_{\L_i}(R_i)$.
Similarly $\F_{i,0}:=N_\F(R_i<S)=N_{\F_0}(R_i)$ are constrained fusion sub-systems of $\F_0=N_\F(S)$ and therefore they have unique linking systems $\L_{i,0}$ and groups $L_{i,0}$ such that $\F_{i,0} = \F_{N_S(R_i)}(L_{i,0})$ and $L_{i,0}=\Aut_{\L_{i,0}}(R_i)$.

By construction, every morphism $P \to Q$ in $\F_0$ extends to an automorphism of $S$ and the arguments in \S\ref{subse:sfs-normaliser-decomposition} using the fact that $\hat{e} \in \L(R_i,S)$ is both a monomorphism and and epimorphism, show that there is a canonical isomorphism $\Aut_{\L_0}(R_i)=N_{L_0}(R_i)$.
Since $\F_{i,0}=N_{\F_0}(R_i)$ the uniqueness of the linking system for $\F_{i,0}$ and \cite[Definition 6.1]{BLO2} yield
\[
L_{i,0}=  \Aut_{\L_{i,0}}(R_i) = \Aut_{N_{\L_0}}(R_i) = \Aut_{\L_0}(R_i) = N_{L_0}(R_i) \leq L_0.
\]
Observe that $\F_{i,0}$ is a subsystem of $\F_i$ with the same Sylow $N_S(R_i)$ which consists of the morphisms $P \to Q$ which extend to an $\F$-automorphism of $S$.
Let $\K$ denote the collection of all the subgroups of $N_S(R_i)$ which contain $R_i$.
Since both $\F_i$ and $\F_{i,0}$ contain $R_i$ as a normal centric subgroup, $\K$ contains all the $\F_i$-centric-radical subgroups and all the $\F_{i,0}$-centric-radical subgroups.
Define a sub-category $\L_i' \subseteq \L_i$ with object set $\K$ and such that $\L_i'(P,Q)$ is the preimage of $\F_{i,0}(P,Q)$ under the projection $\L_i(P,Q) \to \F_i(P,Q)$.
It is clear that $\L_i'$ is a ``partial'' linking system for $\F_{i,0}$ in the sense that it is only defined on the subgroups in $\K$, but satisfies the axioms for a linking system.
Our argument now follows \cite[Step 7 of Theorem 4.6]{BCGLO2}.

There is a functor $\Z \colon \O(\F_{i,0})^\op \to \Ab$ defined by $P \mapsto Z(P)$.
By \cite[Proposition 3.1]{BLO2}, elements of $\varprojlim{}^3 \Z|_\K$ classify, up to isomorphism, the partial linking systems associated to $\F_{i,0}$ defined on $\K$.
Since $\K$ is closed to over-groups and contains all the $\F_{i,0}$-centric-radicals, then \cite[Proposition 3.2]{BLO2} and well known properties of the $\Lambda$-functors imply that
\[
\varprojlim_{\O(\F_{i,0}^c)}{}^3 \Z \xto{\cong} \varprojlim_{\O(\F_{i,0}^\K)}{}^3 \Z|_{\K}
\]
is an isomorphism.
By \cite[Proposition 4.2]{BCGLO} the left hand side vanishes.
This shows that the partial linking system $\L_i'$ must coincide with $\L_{i,0}^\K$ and in particular
\[
L_{i,0}=\Aut_{\L_{i,0}}(R_i) =\Aut_{\L_i'}(R_i) \leq \Aut_{\L_i}(R_i)=L_i.
\]
We obtain the following tree of groups $(G,Y)$ which is defined purely by $\F$ without any reference to an associated linking system
\begin{equation}\label{eq:R-amalgam-L-not-needed}
\xymatrix{
  & 
L_0 \ar@{-}[dl]_{L_{1,0}}  \ar@{-}[d]  \ar@{-}[dr]^{L_{n,0}}
\\
L_1 \ar@{.}[r] & L_i \ar@{.}[r] & L_n.
}
\end{equation}
and define $\pi_{R}$ as its amalgamated product.
By Proposition \ref{prop:sylows} it contains $S \leq L_0$ as a Sylow $p$-subgroup because $N_S(R_i)$ are Sylow $p$-subgroups of $L_i$ and $L_{i,0}$.
In the presence of a linking system for $\L$, this tree of groups coincides with \eqref{eq:robinson-type-amalgam}.

We can now prove that $\F=\F_S(\pi_R)$ using our geometric methods.
The assignment $[R_i] \mapsto \F_i$ and $[R_i<S] \mapsto \F_{i,0}$ define a functor $\nu$ from $\C_Y$ to the category of fusion sub-systems of $\F$.
By applying Ragnarsson's functor $\BB$ we obtain a functor $\BB\nu \colon \C_Y \to \spectra$ together with a map $\BB \nu \to \BB\F$.
Since $\F_i$ and $\F_{i,0}$ are constrained, $\BB \nu \simeq \Sigma^\infty \pcomp{B\G}$ for the functor $\G$ associated to the tree  \eqref{eq:R-amalgam-L-not-needed}.
By adjunction we obtain $B\G \to \Omega^\infty\BB \F$ in $\hotop$ and by Propositions \ref{prop:1-diml-htpy-nat-maps} and \ref{prop:sylows} we obtain $f \colon B\pi_R \to \Omega^\infty\BB \F$ whose restriction to $BS$ is homotopic to $\si_\F^\#$.
Proposition \ref{prop:stable-fusion} implies that $\F_S(\pi_R) \subseteq \F$ and the argument we used above exploiting Alperin's fusion theorem shows that $\F \subseteq \F_S(\pi_R)$.
\end{remark}

\begin{remark}\label{rem:robinson-original-vs-pir}
Robinson's originally constructed in \cite{Robinson07} a group $\Gamma$ as the amalgamated product \eqref{eq:R-amalgam-L-not-needed} but with the groups $L_{i,0}$ replaced with their Sylow $p$-subgroup $N_S(R_i)$.
The group $\pi_R$ that we construct is therefore a quotient of $\Gamma$.
Note however that if $\pi_R$ admits a signaliser functor, then $\F$ has an associated linking system and we can apply Theorem \ref{thmA} to $B\Gamma \to B\pi_R \to \pcomp{|\L|}$ in order to obtain a signaliser functor on $\Gamma$.
\end{remark}

\begin{remark}\label{rem:lib-vir-construction}
In \cite[Theorem 5.5]{Lib-Vir} it is shown that a $p$-local finite group with a ``compressible $\F$-centric'' collection $\R$ admits a group $\Gamma$ which contains $S$ as a Sylow $p$-subgroup with the property that $\F=\F_S(\Gamma)$ and that $\L$ is induced by some signaliser functor on $\Gamma$.
The group $\Gamma$ is simply the group $\pi_R$. 
Our techniques are inspired by this paper.
\end{remark}

%
%
\subsection{Leary-Stancu's construction - the subgroup decomposition}\label{subsec:Leary-Stancu}
The following argument is given explicitly in \cite{Seeliger}.
Fix a $p$-local finite group $(S,\F,\L)$ and let $\Phi=\{\vp_1,\dots,\vp_n\}$ be a set of morphisms $\vp_i \colon P_i \to S$ in $\F^c$ which, together with $\Inn(S)$,  generate $\F$ (recall Alperin's fusion theorem).
For example, $\Phi$ is the set of all the morphisms in $\F^c$.

Consider the oriented graph $Y$ with one vertex $v_0$ and oriented edges $y_1,\dots,y_n$.
Define a grapg of group $(G_,Y)$ where $G_{v_0}=S$, $G_{y_i}=G_{\bar{y_i}}=P_i$ and $y_i \mapsto P_i \xto{\vp_i} S$ and $\bar{y_i} \mapsto P_i \xto{\incl} S$.
Thus, the functor $\G \colon \C_Y \to \Gps$ is depicted by
\[
\xymatrix{
{P_1} \ar@/^0.3pc/[dr]^{\vp_1} \ar@/_0.3pc/[dr]_{\incl}  & \cdots & {P_n} \ar@/^.3pc/[dl]^{\incl} \ar@/_0.3pc/[dl]_{\vp_n}
\\
 & S  &
}
\]
Define a functor $J \colon \C_Y \to \O(\F^c)$ by
\[
J(v_0)=S, \qquad
J([y_i])=P_i, \qquad
J(\bar{y_i})= P_i \xto{\incl} S, \qquad
J(y_i)= P_i \xto{\vp_i} S.
\]
Define $\pi_{LS}:=\pi_1(G,Y,T)$ where the maximal tree $T$ must be $\{ v_0\}$.

Observe that $J^*\be \cong B\G$ in $\hotop$ where $\be$ is the subgroup decomposition, see \S\ref{subsec:sfs-subgroup-decomposition}.
Now we obtain from Proposition \ref{prop:sylows}, whose conditions are satisfied trivially, and Proposition \ref{prop:1-diml-htpy-nat-maps} and the properties of $\be$ described in \S\ref{subse:sfs-normaliser-decomposition}, a sequence of maps
\[
B\pi_{LS} \simeq \hocolim_{\C_Y}B\G \xto{\simeq} \hocolim_{\C_Y} J^*\be 
\xto{J_*} \hocolim_{\O(\F^c)} \be \xto{\simeq} |\L| \to \pcomp{|\L|}
\]
whose restriction to $BS$ is homotopic to $\theta$.
We also observe that $\F \subseteq \F_S(\pi_{LS})$ because $S \leq \pi_{LS}$ and by construction of $\pi_1(G,Y,T)$, see \S\ref{subsec:universal-cover}, every $\vp_i \colon P \to S$ in $\Phi$ is obtained by conjugation by the element $g_{y_i}$.
To summarize, we have shown:

\begin{prop}\label{prop:ls-thmA-conditions}
Fix a $p$-local finite group $(S,\F,\L)$.
Then the group $\pi_{LS}$ contains $S$ as a Sylow $p$-subgroup, it is equipped with a map $f \colon B\pi_{LS} \to \pcomp{|\L|}$ whose restriction to $BS$ is homotopic to $\theta$ and $\F \subseteq \F_S(\pi_{LS})$.
\end{prop}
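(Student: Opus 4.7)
Most of the work is already done by the construction preceding the proposition; the task is to verify that the construction genuinely satisfies the three conclusions by invoking Propositions \ref{prop:sylows} and \ref{prop:1-diml-htpy-nat-maps}.

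First, to see that $S$ is a Sylow $p$-subgroup of $\pi_{LS}$ I would apply Proposition \ref{prop:sylows}\ref{prop:sylows:exist} to the graph of groups $(G,Y)$. Since $Y$ has only the single vertex $v_0$ with $G_{v_0}=S$, condition (ii) of that proposition is vacuous: the only vertex that needs to be reached from $v_0$ is $v_0$ itself, via the empty path. The proposition therefore yields both $\hocolim_{\C_Y} B\G \simeq B\pi_{LS}$ and that $S$ is a Sylow $p$-subgroup of $\pi_{LS}$.

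Next, to produce $f$ and check its restriction to $BS$, I would observe that $J^*\be \cong B\G$ in $\hotop$, where $\be\colon \O(\F^c)\to \Top$ is the subgroup decomposition. This is immediate from the descriptions of $J$ and of $\be$ (which under its $\hotop$-isomorphism is $P\mapsto BP$) on objects and on the non-identity morphisms $y_i$ and $\bar{y_i}$. Since $\C_Y$ is $1$-dimensional, Proposition \ref{prop:1-diml-htpy-nat-maps} supplies a homotopy equivalence $\hocolim_{\C_Y} B\G \xto{\simeq} \hocolim_{\C_Y} J^*\be$, which composes with the canonical map $J_*\colon \hocolim_{\C_Y} J^*\be \to \hocolim_{\O(\F^c)}\be$ and the subgroup-decomposition equivalence $\hocolim_{\O(\F^c)}\be \simeq |\L|$ to give the desired $f\colon B\pi_{LS}\to \pcomp{|\L|}$. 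The naturality statement in Proposition \ref{prop:1-diml-htpy-nat-maps} identifies the restriction of this composite to $BS=B\G(v_0)$ with the canonical map $\be(S)\to \hocolim\be \simeq |\L|$, which by \S\ref{subsec:sfs-subgroup-decomposition} is homotopic, after $p$-completion, to $\theta$.

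Finally, I would prove $\F \subseteq \F_S(\pi_{LS})$ directly. By Alperin's fusion theorem and the choice of $\Phi$, the system $\F$ is generated by $\Inn(S)$ together with $\vp_1,\dots,\vp_n$, so it suffices to exhibit each $\vp_i$ as an $\F_S(\pi_{LS})$-morphism. By the description of $\pi_1(G,Y,T)$ in \S\ref{subsec:universal-cover}, the element $g_{y_i} \in \pi_{LS}$ satisfies $g_{y_i}\,\vp_i(a)\,g_{y_i}^{-1} = a$ for every $a\in P_i$, so $\vp_i$ is the restriction to $P_i$ of conjugation by $g_{y_i}^{-1}$, hence lies in $\F_S(\pi_{LS})$. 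The only step that requires genuine homotopy-theoretic input is the identification $J^*\be \cong B\G$ and the resulting hocolim equivalence from Proposition \ref{prop:1-diml-htpy-nat-maps}; the Sylow statement and the generation argument are then essentially formal.
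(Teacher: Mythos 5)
Your proof is correct and follows essentially the same route as the paper: apply Proposition \ref{prop:sylows}\ref{prop:sylows:exist} (whose hypotheses are trivially met for the one-vertex graph), use the identification $J^*\be\cong B\G$ in $\hotop$ together with Proposition \ref{prop:1-diml-htpy-nat-maps} and the subgroup decomposition to produce $f$ and identify its restriction to $BS$ with $\theta$, and then use the defining relations $g_{y_i}\,\vp_i(a)\,g_{y_i}^{-1}=a$ in $\pi_1(G,Y,T)$ to exhibit each $\vp_i$ as conjugation by $g_{y_i}^{-1}$, giving $\F\subseteq\F_S(\pi_{LS})$. The only difference is that you spell out the verification of the Sylow condition and the precise conjugating element more explicitly than the paper does, which is a welcome (and correct) amplification.
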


Thus, we can apply Theorem \ref{thmA}\ref{thmA:induce-fusion} to recover the results of \cite{Leary-Stancu07}, namely $\F=\F_S(\pi_{LS})$.
Also Corollary \ref{corC} for $\pi_{LS}$ follows since the implication \ref{corC:2}$\Rightarrow$\ref{corC:1} is a triviality and \ref{corC:1}$\Rightarrow$\ref{corC:2} follows from point \ref{thmA:signaliser} of Theorem \ref{thmA}.

\begin{remark}
The construction of $\pi_{LS}$ appears in \cite{Leary-Stancu07}.
It is clear that it is defined purely in terms of $\F$.
The same ideas of Remark \ref{rem:R-avoid-linking-system} can be used to obtain $\F=\F_S(\pi_{LS})$ for any saturated fusion system without the assumption that a linking system exists.
\end{remark}

\subsection{Aschbacher-Chermak's construction - the normalizer decomposition}\label{subsec:Aschbacher-Chermak}

Fix an odd prime $q$ and some $n \geq 1$.
We now recall from \cite{LO1}, see also \cite{LO2}, the construction of Solomon's fusion system $\F_{\Sol}(q^n)$ at the prime $2$.
This fusion system is obtained from the fusion system of $\Spin_7(q^n)$ as follows.
Throughout we write $q^n$ for the finite field $\FF_{q^n}$ and $q^\infty$ for its algebraic closure.
There is a group homomorphism $\omega \colon \SL_2(q^\infty)^3 \to \Spin_7(q^\infty)$ whose kernel is the central subgroup of order $2$ generated by $(-I,-I,-I)$.
We will denote $\omega(X_1,X_2,X_3)=\trp[ X_1,X_2,X_3]$.

Let $H(q^\infty)$ be the image of $\omega$.
The centre of $\Spin_7(q^\infty)$ is a group of order $2$ generated by $z=\trp[ I,I,-I]$.
The set $\{ \trp[\pm I,\pm I, \pm I]\}$ forms an elementary abelian group $U$ of rank $2$.
By \cite[Lemma 2.3]{LO1} there is an element $\tau \in N_{\Spin_7(q)}(U)$ such that $c_\tau(\trp[ X_1,X_2,X_3])=\trp[X_2,X_1,X_3]$ where $c_\tau$ denotes conjugation $g \mapsto \tau g \tau^{-1}$.
Note that $\tau$ normalizes $H(q^{\infty})$.

Fix a copy of the quaternion group $Q_8$ in $\SL_2(q)$ and choose generators $A,B$ of order $4$.
Set (cf. \cite[\S 1]{LO2})
\begin{eqnarray*}
C(q^\infty) &=& \{ X \in C_{\SL_2(q^\infty)}(A) \colon X^{2^k}=I \text{ for some $k$}\} \cong \ZZ/2^\infty \\
Q(q^\infty) &=& \langle C(q^\infty),B \rangle \\
S_0(q^\infty) &=& \omega(Q(q^\infty)^3) \leq H(q^\infty) \\
S(q^\infty) &=& S_0(q^\infty) \cdot \langle \tau \rangle.
\end{eqnarray*}
We will write $\Theta(q^n)=\Theta(q^\infty) \cap \Spin_7(q^n)$ where $\Theta$ denotes one of $C(-),Q(-),S_0(-)$ or $S(-)$ etc.
By \cite[Lemma 2.7]{LO1}, $S(q^n)$ is a Sylow $2$-subgroup of $\Spin_7(q^n)$ hence also of $H(q^n) \cdot \langle \tau \rangle$.
Also $S_0(q^n)$ is a Sylow $2$-subgroup of $H(q^n)$.

There are automorphisms $\hat{\ga}$ and $\hat{\de}_u$ of $S_0(q^\infty)$ defined in \cite[Definition 1.6]{LO2} by
\[
\hat{\ga}(\trp[ X_1,X_2,X_3])=\trp[ X_3, X_1, X_2], \qquad
\hat{\de}_u(\trp[ X_1,X_2,A'B^j])=\trp[ X_1, X_2, (A')^uB^j ]
\]
where $A' \in C(q^\infty)$ and $X_1,X_2,X_3 \in Q(q^\infty)$ and $u$ is a carefully chosen $2$-adic unit.
Set
\[
\Gamma_n = \langle \Inn(S_0(q^n)) , c_\tau, \hat{\de}_u \circ \hat{\ga} \circ \hat{\de}_u^{-1} \rangle \leq \Aut(S_0(q^n)).
\]
The fusion system $\F_{\Sol}(q^n)$ on $S=S(q^n)$ is the fusion system generated by $\F_{S}(\Spin_7(q^n))$ and by $\F_{S_0(q^n)}(\Ga_n)$.

Set $S=S(q^n)$ and $S_0=S_0(q^n)$.
It easily follows from \cite[Proposition 2.5]{LO1} and the definitions that $C_S(U)=S_0$.
Also, $|S \colon S_0|=2$ because $|S(q^\infty) \colon S_0(q^\infty)|=2$ and therefore $S_0$ is fully normalized.
It is also $\F$-centric by the next proposition because by construction of $\F$, the $\F$-conjugates of $S_0$ are $\Spin_7(q^n)$-conjugates.

\begin{prop}\label{prop:spin-cent-s0}
$C_{\Spin_7(q^n)}(S_0(q^n)) \subseteq S_0(q^n)$.
\end{prop}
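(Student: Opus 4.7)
The plan is to reduce the computation of $C_{\Spin_7(q^n)}(S_0(q^n))$ to a centralizer calculation inside $\SL_2(q^\infty)^3$ via the isogeny $\omega$, and then exploit the quaternion-like structure of $Q(q^n)$.

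First, since $U\le S_0(q^n)$, any element of $\Spin_7(q^n)$ centralizing $S_0(q^n)$ automatically centralizes $U$. The structural input I need is the identification $C_{\Spin_7(q^\infty)}(U)=H(q^\infty)$, which is essentially \cite[Proposition 2.5]{LO1} (the same fact that, combined with $|S:S_0|=2$, yields the equality $C_S(U)=S_0$ already recorded before the statement). Intersecting with $\Spin_7(q^n)$ gives $C_{\Spin_7(q^n)}(U)=H(q^n)$, and hence $C_{\Spin_7(q^n)}(S_0(q^n))\subseteq H(q^n)$.

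Next I would lift through $\omega$. Write an arbitrary element $g\in C_{H(q^n)}(S_0(q^n))$ as $g=\omega(X_1,X_2,X_3)$ with $X_i\in\SL_2(q^\infty)$. Since $\omega(Q(q^n)^3)\subseteq S_0(q^n)$, the centralizing hypothesis together with $\ker\omega=\langle(-I,-I,-I)\rangle$ forces, for every $(Y_1,Y_2,Y_3)\in Q(q^n)^3$,
\[
(X_1Y_1X_1^{-1}Y_1^{-1},\,X_2Y_2X_2^{-1}Y_2^{-1},\,X_3Y_3X_3^{-1}Y_3^{-1})\in\{(I,I,I),(-I,-I,-I)\}.
\]
Choosing $Y_2=Y_3=I$ kills the second and third coordinates and rules out the $(-I,-I,-I)$ option, whence $X_1Y_1X_1^{-1}=Y_1$ for every $Y_1\in Q(q^n)$; by symmetry each $X_i$ centralizes $Q(q^n)$ inside $\SL_2(q^\infty)$.

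The remaining step is the identification $C_{\SL_2(q^\infty)}(Q(q^n))=\{\pm I\}$. Since $A\in Q(q^n)$ has order at least $4$, its centralizer in $\SL_2(q^\infty)$ is the unique maximal torus $T$ containing $A$; on the other hand $B\in Q(q^n)$ satisfies $BAB^{-1}=A^{-1}$, so conjugation by $B$ inverts $T$, fixing only the involutions $T[2]=\{\pm I\}$. Consequently each $X_i\in\{\pm I\}$ and
\[
g=\omega(X_1,X_2,X_3)\in\{\trp[\pm I,\pm I,\pm I]\}=U\subseteq S_0(q^n),
\]
which is the desired conclusion. The only genuinely nontrivial ingredient, which I expect to be the main obstacle, is the first reduction $C_{\Spin_7(q^n)}(U)=H(q^n)$; everything after that is a clean and essentially formal computation inside $\SL_2$.
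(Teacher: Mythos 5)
Your proof is correct, and it takes a genuinely different route through the middle of the argument while sharing the same endpoints as the paper. Both proofs begin with the reduction $C_{\Spin_7(q^n)}(S_0(q^n)) \subseteq C_{\Spin_7(q^n)}(U) = H(q^n)$ and both ultimately rest on the fact that $Q_8$ contains its own centralizer in $\SL_2(q^\infty)$ (the paper's Lemma \ref{lem:centr-q8-in-sl2q}, whose content you re-derive inline via the maximal torus $T=C_{\SL_2(q^\infty)}(A)$ and the Weyl inversion implemented by $B$). The difference is in what gets lifted through $\omega$. The paper first exploits that $S_0$ is a Sylow $2$-subgroup of $H(q^n)$ to write $C_{H(q^n)}(S_0)=Z(S_0)\times C'$ with $C'$ of odd order (a Schur--Zassenhaus step), then lifts a putative nontrivial odd-order $x\in C'$ to $\omega^{-1}(x)=\{\tilde x_1,\tilde x_2\}$ and uses the order discrepancy between $\tilde x_1$ and $\tilde x_2$ to force $(Q_8)^3$ to centralize $\tilde x_1\in (Q_8)^3$, whence $\tilde x_1=1$. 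You instead lift an arbitrary centralizing element $g=\omega(X_1,X_2,X_3)$ directly, and by testing against elements of the form $(Y_1,I,I)$, $(I,Y_2,I)$, $(I,I,Y_3)$ in $Q(q^n)^3$ you rule out the $(-I,-I,-I)$ branch of the kernel, forcing each $X_i$ to centralize $Q(q^n)$ and hence to lie in $\{\pm I\}$. This is a bit cleaner: it avoids the Sylow/Hall splitting entirely and gives the slightly sharper conclusion $C_{\Spin_7(q^n)}(S_0(q^n))=U$ rather than merely $\subseteq S_0(q^n)$.
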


\begin{proof}
Since  $U \subseteq S_0$ we deduce from \cite[Proposition 2.5]{LO1} that $C_{\Spin_7(q^n)}(S_0) \subseteq C_{\Spin_7(q^n)}(U)=H(q^n)$ which contains $S_0$ as a Sylow $2$-subgroup and therefore $C_{\Spin_7(q^n)}(S_0)=Z(S_0) \times C'$ where $C'$ has odd order.
We need to show that $C'=1$.
Choose $x \in C'$ of order $r$.
Since $\ker \omega$ is central of order $2$ then $\omega^{-1}(x)$ consists of an element $\tilde{x}_1$ of order $r$ and an element $\tilde{x_2}$ of order $2r$.
Now $(Q_8)^3 \subseteq \omega^{-1}(S_0)$ must leave $\{\tilde{x}_1,\tilde{x}_2\}$ invariant because $S_0$ centralizes $x$ and since the orders of $\tilde{x}_1,\tilde{x}_2$ are different $(Q_8)^3$ centralizes these elements.
By Lemma \ref{lem:centr-q8-in-sl2q} below $\tilde{x}_1 \in Q_8$ and therefore $\tilde{x}_1=1$ which implies that $x=1$.
\end{proof}

\begin{lem}\label{lem:centr-q8-in-sl2q}
Fix an odd prime $q$.
Then
\begin{enumerate}
\item
The centralizer of any element $X$ of order $4$ in $\SL_2(q^n)$ is a cyclic group.

\item
Any quaternion group $Q_8$ in $\SL_2(q^\infty)$ contains its centralizer.
\end{enumerate}
\end{lem}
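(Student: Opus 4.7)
The plan for part (1) is to exploit the fact that $q$ is odd, so the only involution in $\SL_2(q^n)$ is $-I$. Hence an element $X$ of order $4$ satisfies $X^2=-I$, so its minimal polynomial divides $t^2+1$, which is separable. Therefore $X$ is a regular semisimple element whose eigenvalues $\pm i$ (a primitive $4$-th root of unity) are distinct and lie in $\overline{\FF}_q$. In an eigenbasis the centralizer of $X$ in $\GL_2(\overline{\FF}_q)$ is the diagonal maximal torus, so $C_{\GL_2(q^n)}(X)$ is the group of $\FF_{q^n}$-rational points of this torus, which is either the split torus $\FF_{q^n}^\times \times \FF_{q^n}^\times$ (when $i\in \FF_{q^n}$) or the non-split torus $\FF_{q^{2n}}^\times$ (when $i\notin \FF_{q^n}$). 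Intersecting with $\SL_2(q^n)$ imposes the determinant-one condition and in either case yields a cyclic group (of order $q^n-1$ or $q^n+1$), which is what we need.

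For part (2), the plan is to reduce to (1). Fix generators $i,j\in Q_8$ with $i^2=j^2=-1$ and $jij^{-1}=i^{-1}$. Set $T = C_{\SL_2(q^\infty)}(i)$; writing $\SL_2(q^\infty)=\bigcup_m \SL_2(q^m)$ and applying (1) at each finite level, $T$ is the ascending union of finite cyclic groups, hence locally cyclic and in particular abelian. Any $g \in C_{\SL_2(q^\infty)}(Q_8)$ lies in $T$. Since $jij^{-1}=i^{-1}$ with $j$ not commuting with $i$, $j$ lies in $N_{\SL_2}(T)\setminus T$, and because the Weyl group of a maximal torus of $\SL_2$ acts on the torus by inversion, conjugation by $j$ inverts every element of $T$. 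In particular $g = jgj^{-1} = g^{-1}$, so $g^2=I$, and by the analysis of involutions in (1) we get $g\in\{\pm I\}\subseteq Q_8$.

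The main obstacle will be verifying cleanly that conjugation by $j$ inverts all of $T$, not just $\langle i\rangle$. One way is the Weyl-group argument sketched above; alternatively, one can diagonalize $i$ over $\overline{\FF}_q$, observe that $j$ intertwines the two eigenspaces (because $jij^{-1}=i^{-1}$), write out $j$ as an off-diagonal matrix in this basis, and compute directly that conjugation by such a matrix inverts any diagonal element of $\SL_2$. Both routes are short once set up.
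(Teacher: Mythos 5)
Your proof is correct, and while part (1) is essentially the paper's argument, part (2) takes a genuinely different route. For (1) the paper diagonalizes $X$ over a quadratic extension and observes that $C_{\SL_2(q^n)}(X)$ sits inside the cyclic centralizer there (the paper actually writes $\FF_{q^{n+1}}$ where $\FF_{q^{2n}}$ is meant, since $\FF_{q^n}\not\subseteq\FF_{q^{n+1}}$ in general); your version avoids this by handling the split and non-split cases of the torus directly over $\FF_{q^n}$, and as a bonus gives the exact orders $q^n\pm 1$. For (2) the paper uses a purely combinatorial argument: $X$ lies simultaneously in the cyclic groups $C(A)$ and $C(B)$, so $\langle X\rangle$ is comparable to $\langle A\rangle$ and to $\langle B\rangle$ in the subgroup lattice, and since $Q_8=\langle A,B\rangle$ is not cyclic one cannot have both $\langle A\rangle,\langle B\rangle\subseteq\langle X\rangle$, forcing $X\in\langle A\rangle$ or $X\in\langle B\rangle$. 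Your argument instead identifies $T=C_{\SL_2(q^\infty)}(i)$ as a maximal torus (valid since $i$ is regular semisimple over the algebraically closed field), notes $j\in N(T)\setminus T$ represents the nontrivial Weyl element and therefore inverts $T$, and concludes $g=g^{-1}$, hence $g=\pm I$. Both are short and correct; the paper's version stays entirely inside elementary group theory of cyclic groups and needs only that the two centralizers are cyclic, whereas yours leans on the algebraic-group picture (maximal torus, Weyl group) and pins down the centralizer as exactly $Z(Q_8)=\{\pm I\}$ rather than merely inside $Q_8$.
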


\begin{proof}
(1)
The minimal polynomial of $X$ is $x^2+1$ which splits in $\FF_{q^{n+1}}$ which contains $\zeta=\sqrt{-1}$.
Thus, $X$ is conjugate in $\GL_2(q^{n+1})$ to the diagonal matrix $\diag(\zeta,-\zeta)$ whose centralizer is the subgroup of diagonal matrices  and therefore its $\SL_2(q^{n+1})$ centralizer is the cyclic group $\FF_{q^{n+1}}^\times$.
Conjugating back we see that $C_{\SL_2(q^{n+1})}(X)$ is cyclic hence so is  $C_{\SL_2(q^{n})}(X)$.

\noindent
(2)
Fix any $n$ such that $Q_8 \leq \SL_2(q^n)$ and consider $X$ in its centralizer.
Now $Q_8$ is generated by elements $A, B$ of order $4$, so $X$ belongs to the cyclic groups $C_{\SL_2(q^n)}(A)$ and $C_{\SL_2(q^n)}(B)$.
If $\langle A \rangle \subseteq \langle X \rangle$ and  $\langle B \rangle \subseteq \langle X \rangle$ then $Q_8 \leq \langle X \rangle$ which is absurd.
So either $X \in \langle A \rangle$ or $X \in \langle B \rangle$.
\end{proof}

Set $\F=\F_{\Sol}(q^n)$ and $\L=\L_{\Sol}(q^n)$ and $Z$ is the subgroup generated by the element $z$ (this is the centre of $\Spin_7(q^n)$).
Set $\Aut_\L(S_0;Z)=C_{\Aut_\L(S_0)}(Z)$.
By \cite[Propositions 2.11, 3.3]{LO1} the linking system $\L_{\Sol}(q^n)$ contains $\L^c_S(\Spin_7(q^n))$ as a subcategory.
In fact, $\L^c_S(\Spin_7(q^n)) = C_\L(Z)$ is the centralizer linking system \cite[Definition 2.4]{BLO2} and by Proposition \ref{prop:spin-cent-s0} and  \cite[remarks before Definiton 1.6]{BLO2}
\begin{multline*}
\Aut_\L(S_0;Z)=\Aut_{C_\L(Z)}(S_0) = N_{\Spin_7(q^n)}(S_0)/C'_{\Spin_7(q^n)}(S_0)
\\
 = N_{\Spin_7(q^n)}(S_0) \leq \Spin_7(q^n).
\end{multline*}
We obtain inclusions of subcategories of $\L$
\[
\xymatrix{
\L_S(\Spin_7(q^n)) &
\B\Aut_\L(S_0;Z) \ar@{^(->}[r] \ar@{_(->}[l] &
\B\Aut_\L(S_0)
}
\]
It gives rise to the diagram of spaces below, all of whose squares commute, except the one on the top left which only commutes up to homotopy.
\[
\xymatrix{
B\Spin_7(q^n) \ar[d]_{\text{$\mod 2$ equivalence}} &
BN_{\Spin_7(q^n)}(S_0) \ar[r] \ar[l] \ar [d] &
B\Aut_\L(S_0) \ar[d]
\\
|\L_S(\Spin_7(q^n))|^{\wedge}_2 \ar[d] &
B\Aut_\L(S_0;Z)^{\wedge}_2 \ar[r] \ar[l] \ar [d] &
B\Aut_{\L}(S_0)^{\wedge}_2 \ar[d] 
\\
|\L|^{\wedge}_2 &
|\L|^{\wedge}_2 \ar@{=}[r] \ar@{=}[l] &
|\L|^{\wedge}_2.
}
\]
We observe that $N_{\Spin_7(q^n)}(S_0)$ contains the Sylow subgroup $S$ because $|S \colon S_0|=2$.
By Proposition \ref{prop:sylows}, the homotopy colimit of the first row is homotopy equivalent to $B\pi_{AC}$ where $\pi_{AC}$ is the amalgam of the tree 
\begin{equation}\label{eq:AC-amalgam}
\xymatrix@1{
{\Spin_7(q^n)} \ar@{-}[rrr]^{N_{\Spin_7(q^n)}(S_0)} & & & \Aut_\L(S_0)
} 
\end{equation}
which contains $S$ as a Sylow $2$-subgroup.
By Proposition \ref{prop:1-diml-htpy-nat-maps} there is a map $f \colon B\pi_{AC} \to |\L|{}^\wedge_2$ whose restriction to $BS$ is homotopic to $\theta$.
Moreover, $\Spin_7(q^n)$ and  $\Aut_\L(S_0)$ are subgroups of $\pi_{AC}$ and also $\Aut_\F(S_0)=\Aut_{\Aut_\L(S_0)}(S_0)$ contains $c_\tau$ and $\hat{\de}_u \circ \hat{\ga} \circ \hat{\de}_u^{-1}$ because these homomorphisms clearly normalize $S_0(q^n)$.
We therefore deduce that $\F \subseteq \F_S(\pi)$.
Thus we have proven:

\begin{prop}\label{prop:ac-thmA}
The group $\pi_{AC}$ satisfies the hypotheses of Theorem \ref{thmA}.
In particular $\F=\F_S(\pi_{AC})$ and $\pi_{AC}$ has a signaliser functor which induces $\L$.
It also satisfies the condition of Proposition \ref{propB} because \eqref{eq:AC-amalgam} has no loops.
\end{prop}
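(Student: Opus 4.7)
The plan is to verify the three hypotheses of Theorem \ref{thmA} for $\pi_{AC}$, namely that $S$ is a Sylow $2$-subgroup, that $\F\subseteq\F_S(\pi_{AC})$, and that there is a map $f\colon B\pi_{AC}\to|\L|^\wedge_2$ whose restriction to $BS$ is homotopic to $\theta$. The last sentence of the statement is then immediate from Proposition \ref{prop:sylows}\ref{prop:sylows:finite} since the graph \eqref{eq:AC-amalgam} consists of two vertices joined by a single edge and is visibly a tree.

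First I would apply Proposition \ref{prop:sylows} to the tree of groups \eqref{eq:AC-amalgam}, with root vertex $v_{0}$ corresponding to $\Spin_7(q^n)$. The hypotheses of that proposition are precisely what has been verified in the paragraphs just above: $S=S(q^n)$ is a Sylow $2$-subgroup of $\Spin_7(q^n)$ by \cite[Lemma 2.7]{LO1}; $N_{\Spin_7(q^n)}(S_0)$ contains $S$ as a Sylow $2$-subgroup because $|S\colon S_0|=2$; and $S$ also lies in $\Aut_\L(S_0)$ with Sylow index controlled by the fact that $N_S(S_0)=S$ and $\Aut_S(S_0)$ is a Sylow $2$-subgroup of $\Aut_\F(S_0)=\Aut_{\Aut_\L(S_0)}(S_0)/\Inn(S_0)$. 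Proposition \ref{prop:sylows}\ref{prop:sylows:exist} then gives $\hocolim_{\C_Y}B\G\simeq B\pi_{AC}$ and shows $S$ is Sylow in $\pi_{AC}$.

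Next I would produce the map $f$ from the diagram of spaces displayed above \eqref{eq:AC-amalgam}. The top row realises $B\G\colon\C_Y\to\Top$ for the tree of groups \eqref{eq:AC-amalgam}, and the bottom row is the constant diagram at $|\L|^\wedge_2$. The vertical arrows give an isomorphism in $\hotop$ from $B\G$ to this constant diagram: the left vertical is an $\FF_2$-equivalence by \cite[Propositions 2.11, 3.3]{LO1}, the right vertical comes from $\B\Aut_\L(S_0)\hookrightarrow\L$, and the middle vertical comes from $\B\Aut_\L(S_0;Z)\hookrightarrow C_\L(Z)\simeq\L^c_S(\Spin_7(q^n))$ together with the identification $\Aut_\L(S_0;Z)=N_{\Spin_7(q^n)}(S_0)$ obtained in the discussion above via Proposition \ref{prop:spin-cent-s0}. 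Applying Proposition \ref{prop:1-diml-htpy-nat-maps} to the $1$-dimensional category $\C_Y$ yields a homotopy equivalence $\hocolim_{\C_Y}B\G\xrightarrow{\simeq}|\L|^\wedge_2$, and composing with $B\pi_{AC}\simeq\hocolim_{\C_Y}B\G$ gives $f\colon B\pi_{AC}\to|\L|^\wedge_2$. The compatibility statement in Proposition \ref{prop:1-diml-htpy-nat-maps}, together with the fact that $BS\to B\Spin_7(q^n)$ factors $\theta$ via the subgroup decomposition description of $|\L|^\wedge_2$, shows that $f|_{BS}\simeq\theta$.

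Finally I would verify $\F\subseteq\F_S(\pi_{AC})$ by unpacking the generators of $\F_{\Sol}(q^n)$. By construction \eqref{eq:AC-amalgam}, both $\Spin_7(q^n)$ and $\Aut_\L(S_0)$ embed into $\pi_{AC}$, so $\F_S(\pi_{AC})$ contains $\F_S(\Spin_7(q^n))$ as well as every $\F$-automorphism of $S_0$. In particular the automorphisms $c_\tau$ and $\hat{\de}_u\circ\hat{\ga}\circ\hat{\de}_u^{-1}$ that generate $\Gamma_n$ together with $\Inn(S_0)$ lie in $\F_S(\pi_{AC})$, since each of them normalises $S_0$ and so defines an element of $\Aut_\F(S_0)\subseteq\Aut_\L(S_0)$ inside $\pi_{AC}$. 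Since $\F=\F_{\Sol}(q^n)$ is generated by $\F_S(\Spin_7(q^n))$ and $\F_{S_0}(\Gamma_n)$ this gives $\F\subseteq\F_S(\pi_{AC})$. With the three hypotheses of Theorem \ref{thmA} now in hand, points \ref{thmA:induce-fusion} and \ref{thmA:signaliser} of that theorem deliver the conclusion. The main technical point in this verification is the identification $\Aut_\L(S_0;Z)=N_{\Spin_7(q^n)}(S_0)$, which is where Proposition \ref{prop:spin-cent-s0} is needed to pass from $C_\L(Z)$ to an honest subgroup of $\Spin_7(q^n)$; everything else is a bookkeeping exercise comparing the three generating sources of morphisms in $\F_{\Sol}(q^n)$.
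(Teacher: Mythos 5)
Your proof is correct and follows essentially the same route as the paper: the amalgam \eqref{eq:AC-amalgam}, Proposition \ref{prop:sylows} for the Sylow condition, Proposition \ref{prop:1-diml-htpy-nat-maps} to produce $f\colon B\pi_{AC}\to|\L|^\wedge_2$ restricting to $\theta$, and the generators $\F_S(\Spin_7(q^n))$ and $\Gamma_n$ to see $\F\subseteq\F_S(\pi_{AC})$. One small slip: $\Aut_\F(S_0)=\Aut_{\Aut_\L(S_0)}(S_0)$ already, not its quotient by $\Inn(S_0)$ (that quotient would be $\Out_\F(S_0)$); the Sylow index claim is still fine because $S_0$ was shown above to be fully normalized, which is all Proposition \ref{prop:sylows}(ii) needs.
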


\begin{remark}\label{rem:improved-ac}
The group $\Gamma$ that Aschbacher and Chermak constructed in \cite{Asch-Chermak} is the amalgamated product 
\[
\xymatrix@1{
{\Spin_7(q^n)} \ar@{-}[rrr]^{N_{\Spin_7(q^n)}(U)} & & & K
}
\]
where $|K \colon N_{\Spin_7(q^n)}(U)|=3$.   
They need the hypothesis that $q \cong 3 \text{ or } 5 \mod 8$, however, which we don't.

Indeed, the groups $\Gamma$ and $\pi_{AC}$ are different.
We will show below that $U$ is the unique normal subgroup of $S_0=S_0(q^n)$ which is isomorphic to the Klein group $\ZZ/2\oplus \ZZ/2$ and therefore $N_{\Spin_7(q^n)}(S_0) \subseteq N_{\Spin_7(q^n)}(U)$.
The inclusion is proper because $H(q^n)=C_{\Spin_7(q^n)}(U)$ does not normalize $S_0(q^n)$.
In addition, $\Aut_{\Spin_7(q^n)}(U)=C_2$ and $\Aut_{\F_{\Sol}(q^n)}(U)=\Si_3$ and $\Aut_{\F_{\Sol}(q^n)}(S_0)$ contains $\hat{\de}_u\hat{\ga}\hat{\de}_u^{-1}$ from which we deduce that $|\Aut_\L(S_0) \colon N_{\Spin_7(q^n)}(S_0)|=3$.
Thus the groups we use in the amalgam \eqref{eq:AC-amalgam} are smaller than those used by Aschbacher and Chermak.
It is interesting to see how our approach interprets the signaliser functors geometrically in Theorem \ref{thmA} rather than algebraically.

\begin{claimx}
$U$ is the unique normal subgroup of $S_0(q^n)$ isomorphic to the Klein group.
\end{claimx}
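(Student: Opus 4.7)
The plan is to identify $U$ as the centre $Z(S_0)$ and to show that any normal Klein four subgroup $V \neq U$ would contain a noncentral involution $v$ whose commutator subgroup $[S_0,v]$ equals all of $U$, contradicting normality.

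First I verify $Z(S_0) = U$: since $Q(q^n)$ is generalized quaternion with centre $\{\pm I\}$, the centre of $Q(q^n)^3$ is $\{\pm I\}^3$, and $\omega$ maps this onto $U$. Now let $V$ be a normal Klein four subgroup of $S_0$. Normality in a $2$-group gives $V \cap Z(S_0) = V \cap U \neq 1$; if $V \leq U$ then $V = U$ by a count of orders, so I may assume $|V \cap U| = 2$. Pick $u \in (V \cap U) \setminus \{1\}$ and an involution $v \in V \setminus U$, so $V = \{1,u,v,uv\}$. Since $u$ is central, $svs^{-1} \neq u$ for any $s \in S_0$ (otherwise $v = s^{-1}us = u$), hence $svs^{-1} \in \{v, uv\}$ and $[s,v] \in \langle u\rangle$; thus $[S_0, v] \leq \langle u\rangle$, a subgroup of order $2$.

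Write $v = \omega(X_1, X_2, X_3)$. The equation $v^2 = 1$ combined with $v \notin U$ forces each $X_i$ to have order $4$. For $s = \omega(Y_1, Y_2, Y_3)$ we have $[s,v] = \omega([Y_1,X_1], [Y_2,X_2], [Y_3,X_3])$. The condition $[s,v] \in U$ for all $s$ forces each $\langle X_i\rangle$ to be normal in $Q(q^n)$: this is automatic when $Q(q^n) = Q_8$, and for $Q_{2^k}$ with $k \geq 4$ it pins $X_i$ down to $\pm c_0$, the order $4$ element of the maximal cyclic subgroup $C$. In either case $[Y_i, X_i]$ lies in $\{\pm I\}$ and equals $-I$ precisely when $Y_i \notin C_Q(X_i)$; because $X_i$ is noncentral in $Q(q^n)$, $C_Q(X_i)$ is a proper subgroup. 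Letting $Y_1, Y_2, Y_3$ vary independently, the triple $([Y_1,X_1], [Y_2,X_2], [Y_3,X_3])$ traverses all of $\{\pm I\}^3$, so $[S_0, v] = U$ has order $4$, contradicting $[S_0,v] \leq \langle u\rangle$. Therefore $V = U$.

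The main obstacle is the structural input about the generalized quaternion group $Q(q^n)$: identifying precisely which subgroups of order $4$ are normal (so that normality of $\langle X_i\rangle$ in $Q(q^n)$ forces $X_i$ up to sign) and checking that $C_Q(X_i)$ is a proper subgroup in each case so the commutator can independently realize both values $\pm I$. Once these are in place, the computation of $[S_0, v]$ is routine.
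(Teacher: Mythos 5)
Your argument is correct in outline but takes a genuinely different route from the paper. The paper fixes an element $\tilde X\in\omega^{-1}(E)$ whose components have order $4$, uses Lemma \ref{lem:centr-q8-in-sl2q} to bound the $(Q_8)^3$-stabiliser of $\tilde X$, and counts: the orbit of $\tilde X$ inside $\omega^{-1}(E)$ has at least $2^3$ elements, which overflows the $8$-element set $\omega^{-1}(E)$. You instead identify $U=Z(S_0)$, use normality of $V$ in the $2$-group $S_0$ to extract a noncentral involution $v\in V\setminus U$ with $[S_0,v]\le\langle u\rangle$ of order $2$, and then show $[S_0,v]\supseteq U$ has order $4$. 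That is a cleaner conceptual reduction, and it is what the normality hypothesis is really for.

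There is, however, one point you gloss over which a careful reader will flag: the lift $(X_1,X_2,X_3)\in Q(q^\infty)^3$ of $v$ need not lie in $Q(q^n)^3$, and in fact $S_0(q^n)$ is strictly larger than $\omega(Q(q^n)^3)$ (it contains $\omega(X_1,X_2,X_3)$ with $\psi^{q^n}(X_i)=-X_i$). So the phrase ``$\langle X_i\rangle$ is normal in $Q(q^n)$'' does not literally make sense until you have shown $X_i\in Q(q^n)$, and your identification $Z(S_0)=U$ via $Z(Q(q^n)^3)/\ker\omega$ also needs a word. Both issues are repaired by the same device the paper uses: since $(Q_8)^3\subseteq\omega^{-1}(S_0)$ and, by Lemma \ref{lem:centr-q8-in-sl2q}, $C_{\SL_2(q^\infty)}(X_i)$ is cyclic and $C_{\SL_2(q^\infty)}(Q_8)\le Q_8$, one sees directly that $Z(S_0)\le U$ and that some $Y_i\in Q_8$ fails to centralise $X_i$. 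Together with $[\,\omega(Y),v\,]=\omega([Y,X])\in U$ for all $Y\in(Q_8)^3$ (which follows from $[S_0,v]\le\langle u\rangle$), the three commutators $[Y_i,X_i]\in\{\pm I\}$ independently realise both values, giving $[S_0,v]\supseteq U$ and your contradiction. With that adjustment the proof is complete, and you never actually need to determine which cyclic subgroups of $Q(q^n)$ are normal.
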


\begin{proof}
Consider a Klein group $E \lhd S_0$ and set $\tilde{E}=\omega^{-1}(E)$.
Note that $\tilde{E}$ is normalized by $(Q_8)^3 \leq Q(q^\infty)^3$.
Fix an element $\tilde{X}=(X_1,X_2,X_3) \in \tilde{E}$ and note that $X_i^2=\pm I$ because all the elements of $E$  have order $2$ and $\ker \omega=\pm(I,I,I)$.
If $X_i$ has order $4$ then $X_1^2=X_2^2=X_3^2=-I$ so $X_1,X_2,X_3$ have order $4$ and Lemma  \ref{lem:centr-q8-in-sl2q} implies that $\tilde{E}$ contains at least $2^3$ different $(Q_8)^3$ conjugates of $\tilde{X}$.
This is impossible, so the order of $\tilde{X}$ is at at most $2$.

If $E \neq U$ then we may assume that $X_1 \neq \pm I$ hence $X_1=WB^{\pm 1}$ for some $W \in C(q^\infty)$.
Now it follows that $\tilde{X} \cdot B\tilde{X} B^{-1} = (-W^2,I,I) \in \tilde{E}$ must also have order $2$.
This can only happen if $W \in \langle A \rangle \leq C(q^\infty)$ which implies that $X_1 \in Q_8$ whose only element of order $2$ is $-I$ and this is absurd.
\end{proof}
\end{remark}

\begin{remark}
The construction which we have presented here is motivated by the normalizer decomposition with respect to the collection of the elementary abelian subgroups $\E$, see \cite{Lib06}.
If $\D$ is the subposet of $\bar{S}(\E)$ of the form $[Z] \leftarrow [Z < U] \to [U]$ then the restriction of the normalizer decomposition to $\D$ has the form
\[
|C_\L(Z)| \longleftarrow |C_\L(U)|_{h \Aut_\F(Z<U)} \longrightarrow |\C_\L(U)|_{h\Aut_\F(U)}.
\]
Up to $2$-completion, the space on the left is $B\Spin_7(q^n)$.
A careful examination of the constructions in \cite[Theorem B]{Lib06} shows that these maps are induces by inclusion of categories which contain $\B\Aut_{C_\L(Z)}(S_0)$ and $\B\Aut_\L(S_0)$ as very natural full subcategories to look at.
We omit the details.
\end{remark}


\end{document}